\documentclass[final]{aupl}

\newcommand{\labbel}[1]{\label{#1} [[{\bf #1}]]}  
\newcommand{\bibbitem}[1]{\bibitem{#1} [[{\bf #1}]]}  
\renewcommand{\labbel}{\label} \renewcommand{\bibbitem}{\bibitem}

\usepackage{
amsfonts,
latexsym,
amssymb,
amsmath,
amsthm,
enumerate,
color,
verbatim}

\usepackage[matrix,arrow]{xy}

 \definecolor{reeed}{RGB}{0,0,25}

\newcommand{\arxiv}[1]{{\color{reeed}#1}}

\numberwithin{equation}{section}

\newtheorem{theorem}{Theorem}[section]

\newtheorem{proposition}[theorem]{Proposition}

\newtheorem*{proposition*}{Proposition} 
\newtheorem*{claim*}{Claim}

\newtheorem*{theorem*}{Theorem}
\newtheorem*{corollary*}{Corollary}

\theoremstyle{definition}
\newtheorem{definition}[theorem]{Definition}
\newtheorem*{definition*}{Definition}

\theoremstyle{remark}
\newtheorem{remark}[theorem]{Remark}
\newtheorem*{remark*}{Remark}
 
\newtheorem{example}[theorem]{Example}

\DeclareMathOperator{\Lim}{Lim}
\DeclareMathOperator{\inflim}{inf\hspace{0 pt}lim}

\newcommand{\narr}{{($\overline{ \text{N}}$)}}

\begin{document}

\title{Noncommutative infinitary semigroups}

\author{Paolo Lipparini} 
\address{
Dipartimento di Matematica. Viale della Semi-Ricerca Scientifica,
II Universit\`a di Roma (Tor Vergata), I-00133 ROME, ITALY (currently retired) \\ORCiD 0000-0003-3747-6611}
\email{lipparin@axp.mat.uniroma2.it}
\urladdr{\\http://www.mat.uniroma2.it/\textasciitilde lipparin}

\thanks{Work performed under the auspices of G.N.S.A.G.A. Work 
 supported by PRIN 2012 ``Logica, Modelli e Insiemi''.
The author acknowledges the MIUR Department Project awarded to the
Department of Mathematics, University of Rome Tor Vergata, CUP
E83C18000100006.}

\begin{abstract}
Various kinds of infinitary operations satisfying forms
of associativity have been considered in the literature 
by various authors, including
A. Tarski, C. Karp, J. H. Conway, D. Krob, 
N. Bedon,  and C. Rispal.
 Applications include the arithmetics 
of binary relations, 
quasigroups,
and automata theory,

We present a general definition for an infinitary
noncommutative partial 
semigroup; the definition  extends and encompasses all the previous
notions. In particular, we show
that new phenomena occur in the noncommutative case,
giving rise to a somewhat richer (and, by the way, more difficult)
theory.
\end{abstract}

\keywords{Partial infinitary semigroup, partial infinitary operation, 
noncommutative semigroup, generalized associative laws,
sequence indexed by an ordered set, ordinal-indexed strings} 

\subjclass[2010]
{Primary 20M75, secondary  08A65}

\maketitle

\section{Introduction} \labbel{intr}

The notion of a complete (commutative) monoid,
sometimes  introduced with different terminology,
dates back at least to Krob \cite{K} and implicitly appears in
Conway \cite{C}.  Commutative monoids
with partially defined infinitary products
appeared in Higgs \cite{H}  and in Manes and  Benson \cite{MB}.
 See  Hebisch  and    Weinert \cite{HW} for a survey.
An abstract treatment of 
operations depending on countably many arguments and
satisfying general forms of associativity
appeared earlier  in  Tarski \cite{Tcard} and, even
in the noncommutative case, in \cite{Tord} for sequences 
indexed by the set $ \omega$ of the natural numbers, as a basic
framework for a general study of the arithmetics 
of binary relations. 
General forms of associativity for sequences 
are implicit in the classical theory of 
series convergence.
Some comparatively weak associative properties
with quite strong consequences and essentially dealing 
with products of sequences indexed by $ \omega+ \omega $ 
have been considered in Madevski, Trpenovski and {\v C}upona \cite{MTC}
and 
Belousov and  Stojakovi\'c \cite{BS} 
in connection with infinitary quasigroups.

Another approach 
to general  associativity has been inspired by the study
of  strings, aka words, 
indexed by ordinals.
Some kind of  semigroup structure on such sets of words has long been 
recognized \cite[Chapter II]{Karp}.
Some special cases, e.~g., 
$ \omega$-indexed words
or  words indexed by the order-type $\zeta$ 
of the integers have been thoroughly analyzed
in connection
with techniques and ideas from automata theory.
See Perrin and Pin \cite{PP} for further details.
See also, e.~g.,  Choffrut and  Horv{\'a}th \cite{CH} for references about
transfinite strings.

Bedon \cite{Be}
and Bedon and Carton \cite{BC} gave the definition 
of a (noncommutative) $ \omega_1$-semigroup,
in which all products  of sequences
indexed by  countable ordinals
are defined (in this note we will use the expression $ {<}\omega_1$-semigroup
for this notion, instead).
Words indexed by countable linear orderings
have been considered by Bruy\`ere and Carton; see \cite{BC2}.
 A corresponding abstract notion, that of a $\diamond$-semigroup,
in which all  products of sequences indexed by 
 countable  scattered  linear orders are defined,
appears in  Rispal 
\cite{Ri} and 
 Rispal and Carton \cite{RC}.
See also Bedon and Rispal \cite{BR}. 

We refer to all the quoted sources
for further details and references,
in particular about earlier works. 
Though we have performed a quite extensive search,
the literature probably contains many further  important works
on the subject. 

The aim of the present note is to fully generalize all the above notions
 in the noncommutative case and in the case of products depending on
arbitrarily many arguments.
In Definition \ref{pnc} 
we introduce the notion of a partial infinitary semigroup;
it is a structure with a partial ``product operation''  defined on sequences
indexed by arbitrary linearly ordered sets; this  operation
is required  to satisfy a general 
form of associativity.
The definition encompasses all the notions mentioned above.
 In general,
 we will work in the context of semigroups,
rather than monoids, namely, the existence of ``neutral elements'' 
is not necessarily assumed.

In the rest of Section \ref{defex} 
we  list some easy consequences
of Definition \ref{pnc}, together
with some related remarks and definitions.
Section \ref{secex} is devoted to examples 
which are not commutative. More constructions
are presented in Section \ref{const}.
The connection with the commutative case is analyzed in 
Section \ref{commu}. There we show that
the noncommutative case allows  much richer situations.
For example, Krob \cite{K} showed that every complete
commutative semigroup has  an absorbing element,
while this is not always true in the noncommutative case.
Moreover, it is well-known that in 
the complete commutative case we cannot have some ``infinitary identity''  $e$
together with some other elements such that $a*b=e$,
while this is possible in the noncommutative case (Theorem \ref{abe}).
Finally, in Section \ref{prob} we discuss
some possible variations of the main notion
and ask a few problems.

\section{Preliminaries} \labbel{prel}

Our notation is standard. 
In general, by an \emph{operation}, possibly infinitary,
we mean a \emph{partial operation}, that is, the result of the operation
might be   undefined in some cases.
However, if not otherwise specified, a (finitary) semigroup
is always assumed to be endowed with a total operation. 
In detail, a \emph{semigroup  ``in the classical sense''}
is a set
together with a \emph{total} 
binary associative operation.
Again if not otherwise mentioned, 
 a \emph{function} $f: I \to J$  
is always assumed to  be \emph{total},
 that is, $f(i)$ is always defined,
for every $i$ in the domain $I$ of $f$.  
Composition of functions
is written as if we were drawing diagrams
like
$I \stackrel{f}{\rightarrow}J \stackrel{g}{\rightarrow}K$. 
In formulas,
$(f \circ g)(i) $ means $  g(f(i))$.  

In some examples we will assume familiarity
with ordinals, see e.~g.
 Bachmann \cite{bach}.
\arxiv{Also, Jech \cite{J}, 
 Mendelson \cite{Men} or Sierpi{\'n}ski \cite{sier}.
Also textbooks more specifically 
devoted to ordered sets usually treat ordinals,
 e.~g., Harzheim \cite{Harz} and Rosenstein \cite{Ro}. 
Throughout, $\alpha$, $\beta$ \dots denote ordinals
and $\lambda$, $\mu $\dots denote infinite cardinals.
As usual, a cardinal is identified with the smallest ordinal
having its cardinality.
This is possible since we are always assuming the Axiom of Choice.
}
Note that in the theory of ordinals 
the set $\mathbb N$ of natural numbers is 
denoted by (or, better, identified with) $ \omega$, 
the first infinite ordinal, equivalently, cardinal.
We will generally try to follow the most usual convention
in each particular setting. The reader should be warned that, as
far as the present note is concerned, $\mathbb N$ and $ \omega$
denote exactly the same object.
Also, at  first reading, the reader might
always think of the  notions and the results presented here
as restricted to countable ordinals and sets.
On the opposite direction,
we will state some definitions in terms of 
such large objects as (possibly proper) \emph{classes}. See
e.~g. \cite{J}.
The reader not worried by foundational issues might always
suppose that ``class'' and ``set'' are synonymous words, say, to mean
``collection''. 
\arxiv{Details for the treatment of classes are presented in some 
of the above mentioned books. See \cite{FBL} 
for a full discussion and  Kanamori \cite{Ka}, Tarski \cite[p. 154]{Tcard} 
for further remarks. 
We have listed these references for the
reader who wants to be fully comfortable with respect to 
foundational  issues; of course, they are 
not strictly necessary in order to  understand the present paper.}

\section{The basic notions} \labbel{defex} 

As customary when dealing with non commutative operations,
we will generally use the multiplicative notation instead of the additive 
notation.
In the noncommutative case the order of the factors in a product is relevant,
hence, to give 
the definition of some infinitary product, we have to deal with 
sequences indexed by a linearly ordered 
 set. 
\arxiv{In many cases we will be
concerned only with well-ordered index sets, however the  definition 
seems to have some interest in the general case of  linearly ordered 
index sets. See, e.~g.,  Examples \ref{diamond},
 \ref{ordex}, \ref{labeledord}, \ref{rel}, \ref{addinf} -  \ref{pointw} and
Theorems  \ref{krob}, \ref{abe}.}

Throughout the present note, if not otherwise specified, 
$I$ and $J$ are intended to be 
possibly empty linearly ordered
sets---reference to the order will be made explicit only if  necessary.
If not otherwise and specifically mentioned, \emph{order} and
\emph{ordered set} are synonymous with \emph{linearly ordered set}.
In some examples we will deal with 
partially ordered sets; in such a situation we will
use the expression \emph{poset}.
For the sake of brevity, we  will call
a sequence indexed by a linearly ordered set
 simply a \emph{linearly-indexed sequence}.
 When we speak 
of a \emph{well-ordered sequence}, we intend that the corresponding 
linear order is a well-order.
An \emph{order preserving map}   (or \emph{ordermorphism}, or
\emph{homomorphism of ordered sets}) is
 a  function which preserves the order relations, namely,
$a \leq b$ implies $f(a) \leq f(b)$.
An \emph{order isomorphism} is a bijective order preserving function
such that also its inverse is order preserving.

\begin{definition} \labbel{pnc}
 A  \emph{partial infinitary semigroup}, or 
sometimes simply an \emph{infinitary semigroup} for the sake of brevity,
 is a nonempty class $S$ 
together with a class function $ \prod$
whose codomain is $S$ itself and
whose domain
 consists of a class of linearly-indexed sequences
 of elements of $S$.
The image of $(a_i) _{i \in I} $  under $\prod$, when defined,
will be denoted by $ \prod _{i \in I} a_i $.
If $a_i=a$, for every $i,j \in I$,
we will simply write   $ \prod _{I} a $,
and $\prod _ \emptyset $ in the case of the empty sequence.

A partial infinitary 
semigroup is required to satisfy the following two properties.
  \begin{enumerate}
\item[(U)]
\emph{If $I= \{ i  \} $ has one element,
then 
$\prod _{i \in I} a_i $ is defined 
and is equal to $a_i$.}
   \item[(N)]
\emph{Whenever $\prod _{i \in I} a_i $ is defined
and $\pi:I \to J$ is a surjective order preserving map,
then all products in the following equation are defined
\begin{equation*}\labbel{N}
\prod _{i \in I} a_i = \prod _{j\in J} \prod _{\pi(i) =j }  a_i
  \end{equation*}     
and equality actually holds.}
   \end{enumerate} 

In the above-displayed formula 
the meaning of $ \prod _{\pi(i) =j }  a_i$
is $ \prod _{i \in I_j }  a_i$, where
$I_j  = \{ i \in I \mid \pi(i) =j \}  $ is given the (sub-)order 
induced by the order on $I$. Note that 
$I_j$ is a \emph{convex} subset of $I$, 
that is, if $a <b \in I_j$ and $a< c<b$ holds in $I$, then
$c \in I_j$.  This follows from the assumption 
that $\pi$ is  order preserving. 

As usual, when there is no risk of ambiguity, we will simply write
$S$ in place of $(S, \prod)$. We will usually and 
informally refer to $\prod$ as an \emph{operation},
though formally, in the more usual sense, it is a class of (partial) operations,
one for each ordered set (recall that in this paper
``ordered set'' always means ``linearly ordered set'').
\end{definition}

The cases we will be most interested in are considered in the next definition.

\begin{definition} \labbel{pncx}    
 In case 
$\prod _{i \in I} a_i $ is defined for every nonempty  linearly-indexed 
(respectively, nonempty well-ordered) sequence of elements of $S$,
we say that $S$ is a 
\emph{complete  semigroup}
(respectively, an
\emph{ordinal semigroup}).
Notice that in this note we always mean \emph{complete}
in the infinitary sense, namely, a finitary semigroup with a total
binary  operation
is not complete in our sense. 
 \end{definition}   

\begin{remark} \labbel{conseqa}
 It follows immediately from properties (N) and (U) that 
every partial infinitary semigroup satisfies the following property
  \begin{enumerate} 
\item[(Iso)]
\emph{If 
$I$ and $J$ are isomorphic ordered set,
$f:I \to J$ is an order isomorphism
and $b _{f(i)}  = a_i$, for every $i \in I$, 
then 
$\prod _{i \in I} a_i $ is defined 
if and only if 
$\prod _{j \in J} b_j $ is defined and, in case they are defined,
they are equal.} 
   \end{enumerate}

 \arxiv{
Note that 
$\prod _{j \in J} b_j $
is the same as
$\prod _{j \in J} a _{g(j)}  $, where
$g$ is the inverse of $f$. 

It follows from Property (Iso) that, in order to construct some 
partial infinitary semigroup $S$, it is enough
to define $\prod _{i \in I} a_i $ (or say that the product 
is not defined) 
on just one representative $I$ of each isomorphism class of linear orders,
for every $I$-indexed sequence.
It is intuitively clear that if 
some form of Property (N) is satisfied relatively to some
family of such representatives,
then the operation can be expanded uniquely
using (Iso)
in order to get a partial infinitary semigroup 
in the sense of Definition \ref{pnc}.
 Put in another way, 
infinitary semigroups can
be thought to be defined on (sequences indexed through) order-types,
rather than orders. 
Of course, Property (N)
needs to be checked for coherence in the general case.
Full details in the case of ordinal semigroups
will be given in Appendix I. 
}
 \end{remark}

\arxiv{
We now describe some further special cases of Definition \ref{pnc}. 

\begin{definition*} 
(I) As a cardinal-restricted notion, if $\kappa$ is an infinite cardinal and 
$\prod _{i \in I} a_i $ is defined for all nonempty linearly-indexed 
 sequences having index set of cardinality $<\kappa$,
we say  
that $S$ is a
\emph{${<}\kappa$-complete  semigroup}.
Note that, in parallel situations, many authors say \emph{$\kappa$-complete}
for what we call  ${<}\kappa$-complete.
For example, a filter is usually said to be 
{$\kappa$-complete} if it is closed 
under the intersection of $<\kappa$ members. 
See \cite{J,Ka}.
Any $\kappa$-complete filter in the above sense  
is a ${<}\kappa$-complete semigroup in the present terminology, 
 with the operation of 
intersection (of $< \kappa$ many sets).

(II) If $\gamma$ is an infinite ordinal,
a  \emph{${<}\gamma $-semigroup}
is a partial infinitary semigroup such that 
$\prod _{i \in I} a_i $ is defined for all nonempty well-ordered
 sequences  of order-type $<\gamma$. 
If $\prod _{i \in I} a_i $ is defined whenever 
$I$ is nonempty of order-type  $\leq \gamma$, we speak of a
 \emph{${\leq}\gamma $-semigroup}.
Thus this is the same as a \emph{${<}(\gamma +1)$-semigroup}.
Note that a ${<}\omega$-complete semigroup
is the same as a ${<}\omega$-semigroup, since every finite
order is well-ordered.
\emph{Countably complete}
could be a possible term for a
${<} \omega _1$-complete semigroup.
Recall that $ \omega_1$ is the collection
of all countable ordinals and can be taken as the standard
representative
for the smallest uncountable cardinal. 

(III) In the above definition of, say, an ordinal semigroup, we leave open
the possibility that $\prod _{i \in I} a_i $ is  defined 
even in some cases in which $I$ is not well-ordered. Thus,
for example, according to the above definitions, 
any complete semigroup is also an ordinal semigroup.
 We will sometimes have occasion
to consider a structure in which
 $\prod _{i \in I} a_i $ is defined \emph{exactly} 
in case $I$ is a well-order. In this case we will speak of
an \emph{ordinal semigroup in the strict sense}. 
A similar convention applies to ${<}\kappa$-complete  semigroups
and  to ${<}\gamma $-semigroups.

Note that all  the above definitions  make sense, 
namely, (N) is always applicable, since,
for example in the case of $< \gamma $ semigroups,
 if $ \gamma ' < \gamma  $  and 
$\pi: \gamma ' \to J $ is surjective,
then $J$ and $ \pi^{-1}( \{  j \}) $, for $j \in J$,
are all nonempty and well-ordered of type $< \gamma $.
Compare the statement of Property (Ord)
in Remark \ref{conseqord}.

(IV) We could define a stronger form of, say,
 ${<}\kappa$-completeness in the sense that
whenever $( a_i) _{i \in I} $ is
a sequence, the linear order $I$ is partitioned into
 ${<}\kappa$ convex nonempty sets $I_j$ ($j \in J$)
and  $ \prod _{i \in I_j }  a_i$ is defined, for 
 every $j \in J$,
also $ \prod _{i \in I}  a_i$ is defined.
A similar stronger definition can be given relative to 
${<}\gamma $-semigroups.
We will not need these stronger definitions here;
in particular, our main emphasis is on 
complete  and
ordinal semigroups, so that the cardinal or ordinal-related 
definitions will be essentially used only
in exemplifications. In any case, essentially all the examples we will present
are ${<} \omega $-complete in the above stronger sense. Compare
Remark \ref{trivi} for pathological examples of semigroups
satisfying Definition \ref{pnc} .
Compare also Conditions \narr\ 
in Remark \ref{narr} and (Eq)
in Remark \ref{pvar}.
\end{definition*}   
}

\begin{remark} \labbel{semig} 
Of course, if $n \geq 0$ and $I= \{ 0,1, 2, \dots, n\} $ with 
the usual ordering given by
$0<1 < 2 < \dots <  n$,
we write $a_0*a_1*a_2 * \dots *a_n$  or $a_0a_1a_2  \dots a_n$ in place of
$\prod _{i \in I} a_i $. 
The notation is not ambiguous, by
(Iso) from Remark \ref{conseqa} and,
if $n=0$,
 by property (U). By property (N),
if $a_{0}*a_{1}*a_{2} * \dots * a_{n}$ is defined, then all 
(contiguous) subproducts are defined.
Moreover, by iterating (N),
we can associate terms at will 
inside the above product and
we  get the same outcome in each case.
In particular, a (total binary) semigroup in the classical sense
is essentially the same as a 
${<} \omega $-semigroup, namely an infinitary semigroup
in the sense of Definition \ref{pnc} such that 
$\prod _{i \in I} a_i $ is defined exactly when $I$ 
is finite.

Indeed, if $S$ is an
${<} \omega $-semigroup in the sense of
Definition \ref{pncx}(II),
then 
$a_0 * a_1 = \prod _{i=0,1} a_i $  
defines a total binary operation on $S$ and this operation
 is shown to be associative by applying (U) and 
(N) in two ways to $\prod _{i =0,1,2} a_i $. 
The previous  paragraph then shows that the notation
$a_{0}*a_{1}*a_{2} * \dots * a_{n}$  
 has the same meaning either when it is 
 considered in the sense of classical semigroup theory,
or as an abbreviation
for $\prod _{i \in I} a_i $, with
$I= \{0, 1, 2, \dots, n\} $.

On the other hand, given a semigroup in the classical sense,
the expression $a_{0}*a_{1}*a_{2} * \dots * a_{n}$  is uniquely
defined, for every $ n \geq 0$,
by (finite) generalized associativity.
By taking 
$a_{i_0}*a_{i_1} * \dots * a_{i_n}$
as a definition for 
 $\prod _{i \in I} a_i $, for
$I= \{ i_0, i_1, \dots,i_ n\} $
with the ordering
$ i_0 < i_1 < \dots  < i_ n$, we have that
Property (U) is true by definition 
and, again, generalized associativity shows that Property 
(N) holds. 

Henceforth we will
not distinguish between the notions of a semigroup 
(in the classical sense)
and of an 
${<} \omega $-semigroup 
\arxiv{ (in the strict sense,
cf. Definition \ref{pncx}(III)), }
 though formally they are distinct
notions.
In the former case we have
just one binary operation,
while in the latter case we have 
countably many operations:
one $n$-ary operation
for each $n > 0$. 
\end{remark} 

\begin{remark} \labbel{trivi}
(a) To be pedantic, some special trivial cases of structures 
satisfying Definition 
\ref{pnc} 
are not really ``semigroups''.
If $\prod _{i \in I} a_i $ is defined exactly 
when $ \vert   I  \vert   =1$, we get the trivial unary identity operation,
by (U).
If $\prod _{i \in I} a_i $ is defined exactly 
when $ \vert   I  \vert   $ is either $1$ or  $2$, we essentially get
a binary operation, required to satisfy no additional property.

(b) On the other hand, if $\prod _{i \in I} a_i $ is defined for every  
$I$ such that  $ \vert   I  \vert = 3  $,
then, by the  arguments in  Remark \ref{semig},  
we get (at least) the structure of a semigroup in the classical sense.
In our terminology, we can expand 
$\prod$ to get even a ${<} \omega $-semigroup.

(c) In a sequel \cite{ordsmg} to the present paper we will
present an infinitary generalization of (b) above.
In detail, we will prove that if 
$\gamma$ is an infinite ordinal and some
infinitary product is defined 
exactly for \emph{all}  sequences indexed by ordinals $\leq \gamma$, then 
such a product
can be uniquely expanded to apply to  
all sequences indexed by ordinals 
having the same cardinality of $\gamma$.
More formally,
any ${\leq} \gamma$-semigroup 
can be uniquely expanded (without adding further elements)
to a ${<} \mu  $-semigroup,
 where $ \mu = \vert   \gamma  \vert  ^+$ is
the cardinal immediately larger than the cardinality of $\gamma$.
 \end{remark} 

 \arxiv{
\begin{remark*}    
We have given Definition \ref{pnc} allowing 
 proper classes just for simplicity.
The reader might always think of every notion
 we deal with as restricted to ordered index sets 
having cardinality less than
some fixed cardinal, as in Definition \ref{pncx}(I)(II),
thus generally avoiding any possible set-theoretical problem\footnote{Strictly
 speaking, in order to avoid proper classes,
 we should require that the ordered sets under consideration
are \emph{subsets}  of some fixed cardinal.}. This approach
 will be explicitly mentioned  in
each particular case. 
See Examples \ref{ord}(b) - 
\ref{ordex}(b).    
Actually, as we mentioned, at first reading
the reader might even assume 
that  all of our definitions and notions
are restricted to  countable indexed  sets, still
getting some interesting results (at least in the author's opinion).
Stating the results for classes
presents no serious problem
and surely simplifies definitions 
and statements\footnote{To be pedantic, Definition \ref{pnc}, as stated,
might appear to be not correct even working in a set theory 
allowing proper classes. The definition can be reformulated in a 
safe way by letting an infinitary semigroup  be a class of triplets
of the form $(S,I,\prod_I)$, where $S$ is the same set for each triple,
each $I$ is a linearly ordered set and each $\prod_I$ is a partial function 
from $I$ to $S$.}. 
 \end{remark*} 
}

For  sequences indexed by ordinals $ \leq \omega$, the conjunction of 
Postulates II and II$'$ considered in
Tarski \cite{Tord}, though formally
slightly different, is
 equivalent to (N), via Postulate I. 
\arxiv{ Compare Remark  \ref{conseqord} below. }
However, the main emphasis
in \cite{Tord} is on objects  with more structure
and satisfying
further conditions. 
As we mentioned, a definition bearing some resemblance to 
Definition \ref{pncx} appears in \cite{K},
but the notions from \cite{K} imply  very strong forms
of commutativity; see Section \ref{commu}.
On the other hand, the authors of \cite{Ri,RC} 
give a  definition 
essentially equivalent to 
Definition \ref{pncx},
though they restrict themselves
only to countable  scattered linear orders.
Their definition  can be  extended without any special effort to apply 
to arbitrary linearly ordered sets 
and in this way  turns out to be equivalent to 
the notion of a complete semigroup as 
introduced in Definition  \ref{pncx}.

\begin{remark} \labbel{conseqc}
 Note that  it  follows from property (N) that
if $\prod _{i \in I} a_i $ is defined and $H$ is a convex subset of $I$,
then  $\prod _{i \in H} a_i $ is defined, too.

\arxiv{
Let us say that a partial infinitary semigroup $S$ is 
\emph{$I$-complete} if
$\prod _{i \in I} a_i $ is defined,
for every $I$-indexed sequence of elements of $S$.
 We then get that if $S$ is $I$-complete, then 
$S$ is $J$-complete, whenever $J$ is a convex subset of $I$. 

 As a final remark, we note that (N) is equivalent to the
conjunction of (Iso) and of the weaker version of (N)
in which $\pi$ is the canonical projection into 
a quotient $J$ given by an equivalence relation on $I$ 
with convex equivalence classes. }
\end{remark}

\arxiv{ 
\begin{remark} \labbel{conseqord}
 In comparison with arbitrary (linear) orderings, 
the case of ordinal semigroups  is simpler, due to the
facts that each well-ordered set is order-isomorphic with
a unique ordinal and that  ordinals have no
nontrivial automorphisms. 
In fact, with reference to   Definition \ref{pncx}, 
a more appropriate name for what we call an ordinal-semigroup would
have been a \emph{well-order-complete semigroup}. 
The fact that it is enough to define 
the operations on the ordinals 
justifies the shorter  terminology.

Now for some more details.
Recall that  every well-ordered set
is order-isomorphic to an ordinal or, in other words,
the ordinals form a class of representatives
for the class of well-ordered types.
Hence, in order to construct an 
ordinal semigroup, in view of (Iso) it is enough to define
$\prod _{i < \alpha } a_i $ for $\alpha$ an ordinal.
Property (N) then translates to the following condition
\begin{enumerate}
   \item[(Ord)]
\emph{
Assume that  $\prod _{ \alpha  <\delta } a_\alpha  $ is defined,
 $\pi: \delta  \to \eta$ is a surjective  order preserving map  and,
 for every $j \in \eta$,
let $I_j = \{ \alpha < \delta \mid \pi ( \alpha ) = j \}
= [\beta_j, \beta_j + \varepsilon_j )  $.
Whenever the above assumptions are met,
we require that  
all the products  in
the following equation are defined
\begin{equation*}
\prod _{ \alpha  < \delta } a_ \alpha  = \prod _{j < \eta} 
\prod _{ \gamma < \varepsilon_j } a _{\beta_j + \gamma }
  \end{equation*}     
and equality actually holds.
}
   \end{enumerate} 

It is intuitively clear that if 
 $S$ is a class, $\prod$ is a partial
class operation defined on ordinal-indexed sequences
of elements of $S$ and
 Properties  (U) 
from \ref{pnc} and (Ord) above are satisfied,
then, using (Iso), $S$ can be uniquely expanded to 
some partial infinitary semigroup
in the sense of Definition \ref{pnc}. 
Details will be presented in Appendix I.
In any case, here ordinal semigroups will be mainly considered
only as examples. 

\end{remark} 

Though not exactly our main object of study, 
we will sometimes have occasion to
 talk of homomorphisms of infinitary semigroups.
For partial algebras, 
be them finitary or not,
 there is not a unique notion of homomorphism, see
\cite[Chapter 2]{G}.
We will use the weakest one.

\begin{definition} \labbel{hom}
If $(S, \prod)$ and $(T, \prod')$ are partial infinitary semigroups,
we say that a (total) function $f:S \to T$ is a \emph{homomorphism} if  
$\prod' _{i \in I} f(a_i) $ is defined whenever
$\prod _{i \in I} a_i $ is defined and, if this is the case,
 then
$f \left( \prod _{i \in I} a_i\right)= \prod' _{i \in I} f(a_i) $. 

If $S \subseteq T$ and the inclusion is a homomorphism,
we say that $(T, \prod')$ is an \emph{extension} of $(S, \prod)$.
 If, in addition,
$S=T$,  we talk of an \emph{expansion}.
Thus ``expansion''  means that only the operation    
is expanded, but the base set does not change.
 \end{definition}   
}

\section{Examples} \labbel{secex} 

As we mentioned at the beginning,
there are many examples of commutative infinitary
semigroups. See also Section \ref{commu}.
Since our main interest here is in the noncommutative case, we will
first provide examples of infinitary semigroups which are not 
(more exactly, cannot be made) commutative. 

We are mainly concerned with examples which are
complete semigroups or at least ordinal semigroups.
In the following examples, 
when dealing with 
classical notions in which 
the operations are usually written in additive notation,
we will retain 
the standard use,
instead of  shifting to the 
multiplicative notation.

\begin{example} \labbel{reals}  
($\bullet$)    The real numbers with the usual finitary addition and the 
infinitary operation
$\sum _{i \in \mathbb N } a_{i}$
defined exactly when the series is convergent in the classical sense
is a partial infinitary semigroup.
Indeed, if some series is convergent, we can group together
as many \emph{adjacent} summands as we want,
still getting the same limit. 
If one wants to formally match Definition \ref{pnc},
the operation should be 
 expanded using (Iso) from Remark \ref{conseqa}.
\arxiv{Compare Remark   \ref{conseqord}.}

Note that, unless the series
is absolutely convergent, we are not allowed to change the ordering
of the factors, 
hence this infinitary semigroup is not commutative
in any reasonable sense.
The classical definition of
an infinitary commutative semigroup will
be recalled in Section \ref{commu}.

($\bullet$) Of course, the  above example can be extended to series of 
complex numbers, series of $n$-tuples  of real numbers 
(with fixed $n$ and pointwise convergence), series of functions, 
and even to surreal numbers (see \cite[VIII, Definition 3.20]{Sieg}).
A very general setting is a  Hausdorff topological group,
where  $\sum _{i \in \mathbb N } a_{i}$ is 
defined exactly if the sequence of the partial sums is converging,
in which case the value of the sum is taken to be the limit.
If $\sum _{i \in \mathbb N } a_{i}$ converges to $a$,
then, by continuity, $\sum _{i \geq 1 } a_{i}$ converges to $a - a_0$.
Thus, for example, $\sum _{i \in \mathbb N } a_{i} = a_0 + \sum _{i \geq 1 } a_{i}$.
All the other cases of (N) are similar.

While there might be possibly interesting
examples in which the binary operation is just
a semigroup operation, we do need a group operation in 
order to carry over the above argument.
For example, real numbers with the product do not form a partial infinitary semigroup,
taking the limit of partial products as the infinitary operation.
The limit of the sequence $0$, $0 \cdot (-1)$, $0 \cdot (-1) \cdot (-1)$, 
$0 \cdot (-1) \cdot (-1)\cdot (-1) $,  \dots\ 
is $0$, but the sequence  
$-1$, $(-1) \cdot (-1)$, 
$\cdot (-1) \cdot (-1) \cdot (-1) $, \dots\ has no limit, so that Property (N)
fails. Of course, on the other hand, the set of \emph{nonzero} real or complex numbers
 do form a partial infinitary semigroup
(or, simply, consider as undefined any infinite product with some
null factor).

\arxiv{
($\bullet$) The above examples cannot be extended to 
a complete (infinitary) semigroup, actually,
not even to a $ {\leq}\omega$-semigroup.
Were this the case, the series
$-1+1+(-1)+1+(-1) \dots$ would have some definite value $ \lambda $,
but then (N) implies that $ \lambda $ is equal both to   
$(-1+1)+(-1+1)+ \dots = 0+0+ \dots =0$
and also to
$-1+(1-1)+ \dots =-1+ 0+0+ \dots =-1$,
a contradiction. Of course, this is a standard classical argument;
we report it here in order to illustrate the meaning of (N).

($\bullet$) As usual, instead of $\mathbb R$, we could have considered
$\mathbb R \cup \{ \infty, -\infty \} $, with the standard
conventions about convergence. 
However, if we allow some $a_i$ to be $\infty$,
some caution is needed. For example, it would be natural to
set  $ \infty + (-1)+1+(-1)+1+(-1) \dots = \infty$, but with this position
$\mathbb R \cup \{ \infty, -\infty \} $ is \emph{not} an infinitary semigroup 
in the sense of Definition \ref{pnc}. In fact, Property (N)
and Remark \ref{conseqc} would then imply
 that $-1+1+(-1)+1+(-1) \dots$ is defined, but we have
seen that this leads to a contradiction.

A possibility (among many other ones)
for making $\mathbb R \cup \{ \infty, -\infty \} $
a partial infinitary semigroup in the sense of 
Definition \ref{pnc} is the following.
Sums whose summands are real numbers
are treated classically.
Any sum (finite or infinite) containing both $\infty$ and $-\infty$
as summands is not defined.

 Next, suppose that 
a sum contains at least one $\infty$ and no $-\infty$ 
(the symmetric case is treated the same way).
Any such finite sum is set to $\infty$.
 Any infinite sum in which $\infty$ occurs infinitely many times
is set to $\infty$, as well.
An infinite sum 
with only a finite number of occurrences of $\infty$ 
has value $\infty$ if the subseries 
$\sum _{n<i < \mathbb N} \alpha_{i}$ 
converges either to some real number, or to $\infty$,
where $n$ is the largest place at which $\infty$ 
occurs. Note that $\sum _{n<i < \mathbb N} \alpha_{i}$ 
is a series of real numbers.
Otherwise the sum is undefined.
In the last condition,
the case in which
$\sum _{n<i < \mathbb N} \alpha_{i}$ 
converges to $-\infty$
should be excluded, since, otherwise,  
$\infty - \infty$ should be defined, by Remark \ref{conseqc}.
We would have, say, $\infty -1-1-1\dots = \infty + (-1-1-1\dots) = \infty - \infty$. 

($\bullet$) If we consider
$\mathbb R^{\geq 0} \cup \{ \infty \} $, instead, we get
a complete semigroup, which turns out to be commutative,
hence not an example we are particularly interested here. 
}

($\bullet$) Of course, we can iterate the above examples through the transfinite, 
so that, say, some sums of ordinal-indexed sequences of real numbers 
turn out to be defined.
Note that, generally, only the case of absolutely convergent sequences
is considered in the literature (in which case the ordinal-indexed notion 
essentially reduces to the $ \mathbb N$-indexed notion), while, as we mentioned, 
the definitions   here do not need to assume absolute convergence. 
\end{example} 

Also many notions of limit can be considered
as infinitary semigroups.

\begin{example} \labbel{limits}
(a) The limit operation (for finite and  $ \mathbb N$-indexed sequences)
 in a Hausdorff
topological space can be considered as an infinitary
semigroup, provided we consider the last element of a finite sequence
as the limit of the sequence. Then Property (N)
is a restatement of the fact that if some $ \mathbb N$-sequence converges,
then any infinite subsequence converges to the same limit.

(b) The situation for general linearly ordered sequences is more
problematic.
If $( a_i) _{i \in I} $ is a sequence in a 
Hausdorff topological space, with $I$ linearly ordered,
the standard definition of limit is given by  
$\lim  _{i \in I} a_i = a$ if, for every neighborhood $U$ of
$a$, there is $j \in I$ such that   $a_i \in U$,
for every $i \geq j$ (of course, this is a very special case
of net convergence).

However, the above operation of limit does not give
a partial infinitary semigroup since, say,
if $I= \omega + \omega $, then an $I$-indexed sequence
might converge, while the first $ \omega$th part
of the sequence might diverge, thus (N) fails
(actually, an $\omega + \omega $ indexed sequence
converges if and only if the second $ \omega$th part converges).  
In this connection, see also Remark \ref{pvar}(c). 
  
(c) However, we do get an infinitary partial semigroup if 
in a Hausdorff  $T_3$ topological space  we define
an ``uppercase''   Limit by 
$\Lim  _{i \in I} a_i = a$
if both 
$\lim  _{i \in I} a_i = a$ (in the sense of (b) above)
and moreover 
$\lim  _{i \in K} a_i $
exists for every convex nonempty subset of $I$.
\end{example}

\begin{example} \labbel{omegaPP} 
The notion of an $ \omega$-semigroup from \cite[4.1]{PP} can be recast
as a particular case of a partial infinitary semigroup 
in the sense of Definition \ref{pnc}.
According to \cite{PP}, an \emph{$ \omega$-semigroup}
is a set $S$ which is the disjoint union
of two sets $S_+$ and $S_ \omega $
and some finite and $ \omega$-indexed products are defined.

 In detail, a finite product is defined
in case all of its factors belong to $S_+$, in which case
the product should belong to $S_+$.
A finite product is defined also 
in case all of its factors belong to $S_+$,
except for the last one; in this case, the product belongs to
 $S_ \omega $.
An $ \omega$-indexed products of elements
of $S_+$ is defined and should belong to $S_ \omega $.
All the elements of  $S_ \omega $ can be obtained in this last way.
 No other product is defined. 

The correspondence between the above definition and 
\cite[4.1]{PP} is immediate. 
Properties (N) and (U) imply Properties (1)-(4) in
\cite[4.1]{PP}; and conversely, it is easy to see that, 
assuming (U), 
Properties (1)-(4) in
\cite[4.1]{PP} imply (N) in its full generality.

An $ \omega$-semigroup in the above sense can be 
turned into a complete semigroup.
The shortest way is obtained by adding a new element $ \Omega$
and setting to $ \Omega$ all undefined products.
Full details will be given in Example \ref{addinf}.  
\end{example}

\begin{example} \labbel{omega1}
Finite \emph{$ \omega_1$-semigroups}
have been thoroughly studied in \cite{Be,BC}.
In the present terminology an $ \omega_1$-semigroup
is a  ${<} \omega_1$-semigroups, namely a partial infinitary
semigroup such that $\prod _{i \in I} a_i $ is defined
for every well-ordered set $I$ of cardinality $< \omega_1$.
In the terminology from 
\cite{Be,BC}
 \emph{finite} means only that the base set ($S$, in our notation)
of the semigroup is a finite set; of course, the structure is
formally infinite, since infinitely many products are supposed to be defined.
However, it is proved in the quoted  works that a 
finite $ \omega_1$-semigroups can be completely described
by some finite piece of information.
 \end{example}

\begin{example} \labbel{diamond}
In \cite{Ri,RC}  the notion of a
 $\diamond$-semigroups has been introduced; in the present terminology it is
a partial infinitary semigroup such that every product
indexed by a countable scattered linear ordering is defined.
It is shown that a finite  $\diamond$-semigroup
has again a finite description (as in the previous example,
 ``finite''  means that the base set is finite).
 \end{example}

\begin{example} \labbel{ord}
\emph{Ordinals.}
 The class of all ordinals, with the usual transfinite sum,
is an ordinal semigroup.
This is seen by the first displayed equation in 
 clause (4) in Bachman \cite[p. 51]{bach}.
\arxiv{The reader is kindly referred to, e.~g.,  
\cite{bach,sier}
for further information about ordinals and the operations on them. }
To match formally our general Definition \ref{pncx},
the operation is to be expanded to well-ordered index sets
using Remark \ref{conseqa}.  
\arxiv{See Remark  \ref{conseqord}. 

(b) Let $\lambda$ be any infinite cardinal and 
choose some ordinal $ \gamma $ 
such that any sum of $ < \lambda $ 
ordinals $ < \gamma  $ is still $ < \lambda $.
For example, we can take $\lambda = \omega _1$
and $\gamma = \omega_1 \omega_1 $.  
In particular, we can take 
$\lambda$ an infinite regular cardinal
and $\gamma= \lambda $. 

Then the set of all ordinals $< \gamma  $
with the usual infinitary sum (with $<\lambda$ summands)
is a  ${<}\lambda$-semigroup.
Again, formally, one should expand the operation as
in Remark  \ref{conseqord}. 

(c) As a ``minimal'' particular case of the above example, the set of countable ordinals
is a $ {<}\omega_1$-semigroup.  
 }
 \end{example}   

\arxiv{
Note that there are other ``natural'' infinitary operations  
on ordinals which fail to satisfy infinitary forms of associativity
\cite{Al,w,transf,natprod,another}. 
}

\begin{example} \labbel{strings} 
\emph{Transfinite strings.}
 The class of ordinal-indexed  strings of elements from
some nonempty set $X$, with (infinitary) string concatenation, 
is an ordinal  semigroup, 
as already noticed at least as early as in Karp \cite{Karp}.

In more detail, 
a (transfinite) \emph{string} $s$ 
is a function  $s: \alpha _ s\to X $,
for some ordinal $\alpha_s  $ which depends on $s$.
Of course, this is the same as an ordinal-indexed sequence
of elements of $X$; the term string is  generally used
when considered together with the operation of concatenation.  
If $s$ and $t$ are two strings,
their 
\emph{concatenation} 
$s*t$ is a string defined on 
$\alpha_s + \alpha _t$ by
\begin{equation*}\labbel{str}
(s*t)( \beta )     =
\begin{cases}
s( \beta )   &   \text{if  $ \beta < \alpha _s  $},\\  
 t( \beta' )    &  \text{if  $  \beta = \alpha _s  + \beta ' $ and $\beta' < \alpha _t$ }   \\ 
\end{cases} 
\end{equation*} 

More generally, if $(s_ \gamma ) _{ \gamma \in \delta } $
is a sequence of strings, defined, respectively,
on $( \alpha _ \gamma ) _{ \gamma \in \delta } $ 
then the transfinite concatenation $\prod _{ \gamma < \delta } s_ \gamma  $
is a string on   
 the ordinal sum 
$\sum _{ \gamma \in \delta } \alpha _ \gamma  $.
The definition proceeds by ordinal induction on $\delta$.

If $\delta= 0$, then
 $\prod _{ \gamma < \delta } s_ \gamma = \prod _ \emptyset $
is the empty string.

If $\delta= \varepsilon +1$,
then
$\prod _{ \gamma < \delta } s_ \gamma =
 \left( \prod _{ \gamma < \varepsilon  } s_ \gamma \right) * s_ \varepsilon  $.
 
If $\delta$ is limit, then 
$\prod _{ \gamma < \delta } s_ \gamma $
is the only function which extends each
$\prod _{ \gamma < \varepsilon  } s_ \gamma$,
for $\varepsilon < \delta $.  

There is a vast literature on ordinal-indexed strings,
especially in the countable case.
We refer to Choffrut and Horv{\'a}th \cite{CH} and references there.

Note that Example \ref{ord}  can be identified with a particular case 
of  the present example when $ \vert   X \vert  =1$. 

\arxiv{ 
(b) Under the assumptions in Example  \ref{ord}(b), 
the set of all strings of length $< \gamma  $ with elements from $X$,
with concatenation of $<\lambda $ many strings,  is 
a ${<}\lambda$-semigroup.
Note that in the case $\lambda = \gamma = \omega $
we get the usual (classical) semigroup of strings of finite length. 

(c)
The set of strings of countable length 
(i. e., of length $< \omega_1$)
with elements from $X$
is a $< \omega _1$-semigroup.
}
\end{example}

\begin{example} \labbel{ordex}   
\emph{Partially ordered sets (posets).}
 If $I$ is a linearly ordered set
and $(A_i) _{i \in I} $ is a sequence of pairwise disjoint 
posets,  the \emph{ordered sum}, sometimes called \emph{ordinal sum} 
$\sum _{i \in I } A_{i}$ is the poset
defined on the set  $\bigcup_{i \in I } A_i $ with the order relation defined by the following
condition.
  \begin{enumerate}
   \item[(OS)]  
\emph{$x \leq y$ if and only if either}
  \begin{enumerate}[(i)]    
  \item  
\emph{$x, y \in A_i$, for some $i \in I$, and
$x \leq y$ holds in $A_i$, or}
\item
 \emph{$x \in A_i$, $y \in A _{i'} $,
for some $i ,i' \in I$, and $i < i'$ holds in $I$.}
 \end{enumerate}
 \end{enumerate}

See  Harzheim  \cite[4.1]{Harz}, Rosenstein 
\cite[1, \textsection 4]{Ro} or  Sierpi{\'n}ski \cite[XII, 4]{sier} for further  details. 
The above operation can be used to construct a complete semigroup $S$.
Formally, $S$ is not  on the class of  
posets, but  on the class of their \emph{types}, that is,
their equivalence
classes modulo order isomorphism. Of course, one has to check
that the construction on  types 
is independent from the representatives, we refer again
to \cite{Harz,Ro,sier} for details. 

To check that $S$ is indeed a complete semigroup 
in the sense of Definition \ref{pncx} one has  
to verify that if $I$, $A_i$ are as above
and $\pi:I \to J$ is surjective and order preserving,
then $\sum _{i \in I } A_{i}$
is isomorphic to 
 $\sum _{j \in J } \sum _{\pi(i) =j } A_{i}$.

Note that Example \ref{ord} can be seen as the particular instance  of
the present example restricted to the case when both 
$I$ 
and the $A_i $s are well-ordered.
This is because the ordinals form a class of representatives
for types of well-orders, and ordinal sum allows a definition
in terms of ordered sums of well-orders.
See, e.~g., \cite[XIV, 3]{sier} 

\arxiv{
(b)
If $\lambda$ and $\kappa$ are infinite cardinals such that 
$cf( \kappa ) \geq \lambda $,
then the collection of  types of posets
of cardinality $<\kappa$ is 
a ${<}\lambda$-complete semigroup,
with the above operation of ordered sum.
Indeed, the ordered sum of $<\lambda$ many 
posets, each of cardinality $< \kappa$,
has  cardinality $< \kappa$.

(c)
In particular, 
 the set of  types of countable posets is
a countably complete (i. e., ${<} \omega _1$-complete) semigroup,
with the  operation of ordered sum of types. }
\end{example}

\begin{example} \labbel{labeledord} 
\emph{Labeled posets.}
We can modify the above example to deal with 
types of labeled partial
 orders rather than  orders.
A \emph{labeled poset} with labels from the set $X$ is a poset 
$A$  together with a function $f: A \to X$.
Of course, this is the same as an
$A$-indexed sequence of elements from $X$.

Types of labeled orders are defined in the usual way.
If $f: A \to X$, $g: B\to X$,
the corresponding labeled orders have the same \emph{type}
if there is an order-isomorphism
$h: A \to B$ such that $f(a)= g(h(a))$,
for every $a \in A$.
Note that if $A$ and $B$ are ordinals,
then $f$ and $g$ define the same type
if and only if $f= g$;
this is due to the fact that any ordinal has just one automorphism, the identity
function.
However, the situation might be different for orders 
with non trivial automorphisms. 
Say, if $A$ is the ordered set of the integers,
then $(x_z) _{ z \in A} $ and 
 $(x _{z+1} ) _{ z \in A} $
 represent the same type.
This identification is necessary, if we want Property (N) 
(or just Property (Iso) from Remark \ref{conseqa})
to be
satisfied.
 
Definition (OS) from \ref{ordex} can then be extended
 in the natural way, and it is standard
to see that it is representative-invariant.
 
In a sense,  Examples \ref{ord}, \ref{strings} and  \ref{ordex} above   can all be 
seen as particular cases of
the present example. This is obvious for \ref{strings}; 
indeed, an ordinal-indexed string is the same as a labeled ordinal.
We have already 
mentioned that \ref{ord} 
 can be seen as a particular case of \ref{strings}. As for \ref{ordex},
one can think of a poset as a labeled poset with  labels 
taken from a singleton.
 \end{example} 

\arxiv{
In fact,  Example \ref{labeledord}
(restricted to the case of linear orders) 
provides a universal construction of the ``free'' 
complete semigroup on some set $X$.
Recall the definition of a homomorphism from
Definition \ref{hom}.

\begin{proposition*} \labbel{free}
Suppose that $\lambda$ is an infinite regular cardinal,
$(S, \prod)$ 
is a  ${<}\lambda$-complete semigroup
and $X $ is a nonempty subset of $  S$. 

Let 
$(L, \sum)$ be  the ${<}\lambda$-complete semigroup of types of
 labeled linear orders of cardinality $<\lambda$ 
with labels from the set $X$, as in Example \ref{labeledord} above.

Then there is a unique homomorphism
$f: L \to S$ 
such that 
$f(\bar x)= x$,
where $\bar x$ 
is the type of the one-element order
with its only element labeled as $x$. 
 \end{proposition*}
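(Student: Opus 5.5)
The plan is to write down the only possible definition and then check that it is well defined and a homomorphism. An element of $L$ is the type of a labeled linear order $g: A \to X$ with $\vert A \vert < \lambda$. Such an element is the sum $\sum_{a \in A} \bar{g(a)}$ of one-element labeled orders, by the very definition (OS) of ordered sum extended to labeled orders. Hence any homomorphism $f$ with $f(\bar x)=x$ must satisfy $f(\text{type of } g)= \prod_{a \in A} g(a)$. This already gives uniqueness, since by Definition \ref{hom} a homomorphism preserves defined sums.

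For existence I would define $f(\text{type of } g) = \prod_{a\in A} g(a)$. This product is computed in $S$ and is defined because $S$ is ${<}\lambda$-complete and $\vert A\vert<\lambda$; nonemptiness of $A$ is implicit, as the elements of $L$ are nonempty types. The value is independent of the representative. If $h: A \to B$ is an order isomorphism with $g = h \circ g'$ in the paper's diagrammatic notation (i.e., $g(a)=g'(h(a))$), then (Iso) of Remark \ref{conseqa} gives $\prod_{a\in A} g(a) = \prod_{b\in B} g'(b)$. Condition $f(\bar x)=x$ follows from (U).

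Next I would check the homomorphism property. Let $(\ell_i)_{i\in I}$ be a family in $L$ with $\vert I\vert<\lambda$, and pick representatives $g_i: A_i \to X$ with the $A_i$ pairwise disjoint. Then $\sum_{i\in I}\ell_i$ is represented by $A=\sum_{i\in I} A_i$ labeled by $g=\bigcup g_i$. Regularity of $\lambda$ ensures $\vert A\vert<\lambda$, so this sum is indeed in $L$ and the product over $A$ is defined in $S$. The map $\pi: A\to I$ sending $A_i$ to $i$ is surjective and order preserving by (OS)(ii). Applying (N) yields
\[ f\Bigl(\sum_{i\in I}\ell_i\Bigr)=\prod_{a\in A} g(a)=\prod_{i\in I}\prod_{a\in A_i} g_i(a)=\prod_{i\in I} f(\ell_i), \]
and every product here is defined. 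The one-element case and the requirement that the right side be defined are both covered by ${<}\lambda$-completeness.

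The main obstacle is bookkeeping rather than depth. First, the disjoint representatives have to be chosen carefully, replacing $A_i$ by $\{i\}\times A_i$ if necessary. Second, each inner product $\prod_{\pi(a)=i}$ has to be identified with $\prod_{a\in A_i}$ taken with its own order. This works because the order induced on $A_i$ inside the sum coincides with the original one by (OS)(i).
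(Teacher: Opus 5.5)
Your proof is correct and follows the paper's argument. Uniqueness comes from writing each type as the sum of its one-point labeled pieces. Existence comes from defining $f$ as the product of the labels and checking the homomorphism property via (N) along the projection $\pi$ onto the index set. You are in fact more careful than the paper, which leaves implicit three points you make explicit: independence of the representative via (Iso), the role of the regularity of $\lambda$, and the convention that the types in $L$ are nonempty (needed both for $\prod$ to be defined in $S$ and for $\pi$ to be surjective).
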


  \begin{proof} 
Uniqueness is elementary.
Indeed, suppose that there exists such a homomorphism,
call it $f$. Let $\ell \in L$  be the type of a labeled 
ordered set on the order $I$
and labeled as $( x_i) _{i \in I} $. 
By construction,
$\ell= \sum _{i \in I} \bar x_i $ and,
since $f$ is supposed to be a homomorphism, 
\begin{equation}\labbel{blurp} 
    f(\ell)= f(\sum _{i \in I} \bar x_i) = \prod _{i \in I} f(\bar x_i)
=  \prod _{i \in I} x_i 
 \end{equation}
 by the conditions $f$
is supposed to satisfy.
Hence the conditions uniquely determine $f$. 
  
To prove existence,
we take the identity between
the second and the last term in equation \eqref{blurp} as a definition.
We have to verify
that if $(\ell_j) _{j \in J} $ are types of labeled orders,
then  
$f(\sum _{j \in J} \ell_j) = \prod _{j \in J} f(\ell _j)$.
Say, each  $\ell_j$ has  
the same order-type of $I_j$
and is labeled as $( x_i) _{i \in I_j} $,
where, as usual, the $I_j$s are pairwise disjoint. 
Then, setting $I= \sum _{j \in J} I_j $, 
we get the following chain of identities.
\begin{multline*}\labbel{tribba}
f(\sum _{j \in J} \ell_j) 
=^{\text{def}}
f(\sum _{j \in J} \sum _{i \in I_j} \bar x_i )
= ^{\text{(N)}}
f(\sum _{i \in I} \bar x_i )
= ^{ \eqref{blurp}}
\prod_{i \in I} x_i 
\\
= ^{\text{(N)}}
\prod_{j \in J} \prod _{i \in I_j}  x_i 
= ^{ \eqref{blurp}}
\prod_{j \in J} f(\sum _{i \in I_j}  \bar x_i) 
=^{\text{def}}
 \prod _{j \in J} f(\ell _j)
 \end{multline*}
where we denote by $=^{\text{def}} $ an identity which follows from
the above assumptions and  by $= ^{ \eqref{blurp}}$, $= ^{\text{(N)}}$
 an identity which follows from the corresponding equation. 
\end{proof} 
}

\begin{example} \labbel{rel}
More generally, we could consider the \emph{ordinal sum} 
of structures with a binary relation. 
See \cite[p. 60 and Theorem 2.5(ii)]{Tord}. 
A survey about this and related notions can be found in 
J{\'o}nsson \cite{Jo}, with many historical remarks.

Also the transfinite iteration of the ordinal sum
of combinatorial games constitutes an ordinal semigroup.
In detail, recall the definition of a \emph{combinatorial (``long'') game}
from, e.~g., \cite[Chapter VIII]{Sieg} and the definition of the
(binary) \emph{ordinal sum} $G : H$ of two games from
\cite[p. 89]{Sieg}.  If $(G_ \gamma  ) _{ \gamma < \alpha } $
is an ordinal-indexed sequence of combinatorial games, let us define
the \emph{transfinite ordinal sum} $\prod _{\gamma < \alpha } G_ \gamma $  
by induction as follows.
If $\alpha=0$,  $\prod _{\gamma < \alpha } G_ \gamma $ is the $0$
game in which no player has any move.
If $\alpha= \beta +1$ is successor,
$\prod _{\gamma < \alpha } G_ \gamma $ is the binary ordinal sum
$ \left( \prod _{\gamma < \beta  } G_ \gamma \right)  : G_ \beta $.
If $\alpha$ is limit, a player moves in 
$\prod _{\gamma < \alpha } G_ \gamma $
by choosing some successor ordinal $\beta< \alpha $
and making a move in the already defined game
$\prod _{\gamma < \beta  } G_ \gamma $. 

Note that the representation of a surreal number as a sign sequence
is an ordinal sum as above of $1$ and $-1$s.
 \end{example}

\begin{example} \labbel{ordprod}
\emph{Ordinals with product.}
 As another example, we can take the class of ordinals with infinitary (ordinal-indexed) products, thus getting an ordinal  semigroup with respect to $\prod$. See clause (4) in \cite[p. 51]{bach}. 
Then $(Ord, \sum, \prod)$ can be called an \emph{ordinal near-semiring},
since  both operations have a neutral element and  infinitary right-distributivity holds: 
$ \alpha  \sum _{ \delta \in \gamma } \beta _ \delta =
 \sum _{ \delta \in \gamma } \alpha  \beta _ \delta  $.
See clause (5) in \cite[p. 51]{bach}. 
 \end{example}

\section{Constructions} \labbel{const}

This section is still devoted to examples
of infinitary semigroups. Here we start with some given structure,
in most cases an arbitrary semigroup,   and 
construct a new semigroup  satisfying certain properties.
\arxiv{Sometimes the starting structure is not actually a semigroup, 
as in  Example \ref{incr}.}

First, we show that, under a weak  natural request,
there are many semigroups (in the classical sense)
that cannot even be extended  to an $ {\leq}\omega$-semigroup,
namely an infinitary semigroup such that all finite products
and all products of sequences with order-type $ \omega$ are defined. 
The next proposition  is modeled
after classical examples in the commutative case.
It shows that if some semigroup has an idempotent element
$e$ and two elements $a$, $b$ such that $ab=e=ba$,
 $ae=e$ and  $be=b$ ,
then we cannot even have $\prod _ \omega e = e$
in an extension which is an $ {\leq}\omega$-semigroup. 

Let us write $aaa\dotso$ in place of
$\prod _ \omega a$, and let similar abbreviations be in charge. 

\begin{proposition} \labbel{abeba} 
Suppose that $S$ is a partial infinitary semigroup, $a,b,e \in S$,
the product $abababa\dotso $ is defined and the  identities
$ab=e=ba$, $ae=a$, $be=b$  and  $eee\dotso=e$   hold.
Then $a=e=b$.

In particular, if some ${\leq} \omega $-semigroup
has some neutral element $e$ such that  
$\prod _ \omega e = e$, then no element
distinct from $e$ has a bilateral  inverse.
\end{proposition}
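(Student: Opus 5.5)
Let $x=abababa\dotso$, which is defined by hypothesis; the proof uses only (N) (regrouping adjacent factors) and (U), and computes $x$ in two ways.

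\emph{First grouping.} Pair the factors as $(ab)(ab)(ab)\dotso$. Formally, take $\pi:\omega\to\omega$ with $\pi(2n)=\pi(2n+1)=n$; this is surjective and order preserving. By (N),
\[
x=\prod_{n\in\omega}(ab)=eee\dotso=e .
\]

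\emph{Second grouping.} Isolate the first factor and pair the rest as $a(ba)(ba)\dotso$. Formally, take $\pi:\omega\to\omega$ with $\pi(0)=0$ and $\pi(2n-1)=\pi(2n)=n$ for $n\ge1$. Then (N) gives
\[
x=\prod_{j\in\omega}c_j,\qquad c_0=a,\quad c_j=ba=e \ (j\ge1).
\]
Regroup once more with $\pi(0)=0$ and $\pi(j)=1$ for $j\ge1$, a surjection onto the two-element order $\{0,1\}$. Using (U) for the singleton block, this gives $x=a\cdot(eee\dotso)=ae=a$. So $a=e$.

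\emph{The element $b$.} Since $a=e$, we get $b=be=b(ab)=(ba)b=eb$, but we cannot yet conclude $b=e$. The cleanest route is to run the second argument on the tail of the sequence: $\prod_{i\ge1}$ of the original sequence is the product $babab\dotso$. It is defined by Remark \ref{conseqc} (convex subset), after reindexing by (Iso). Applying the second computation to it with the roles of $a$ and $b$ swapped (the first factor is now $b$, followed by $(ab)(ab)\dotso$) gives $babab\dotso=b(eee\dotso)=be=b$. Alternatively, grouping the whole sequence as $a(babab\dotso)$ gives $x=ab'$ with $b'=babab\dotso$, and then $b'=b$ follows in the same way.

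Also, $babab\dotso=(ba)(ba)\dotso=eee\dotso=e$ by the pairing argument. Hence $b=e$. No step is a real obstacle; the only care needed is checking that each regrouping map is a surjective order preserving map, so that (N) applies and all intermediate products are defined.

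\emph{The ``in particular'' clause.} If $e$ is neutral, $\prod_\omega e=e$, and $ab=e=ba$, then $ae=a$ and $be=b$. In a $\le\omega$-semigroup the product $abab\dotso$ is defined, so the first part gives $a=e=b$.
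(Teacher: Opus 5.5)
Your proof is correct and is essentially the paper's: compute $abab\dotso$ as $(ab)(ab)\dotso=eee\dotso=e$ and as $a(ba)(ba)\dotso=a(eee\dotso)=ae=a$, then repeat the argument on the tail $baba\dotso$, which is defined by Remark~\ref{conseqc}. Your aside that one ``cannot yet conclude $b=e$'' once $a=e$ is wrong, since $b=be=ba=e$ follows immediately; the second run (which the paper also performs) is valid but not needed.
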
  

\begin{proof} 
By using (N) quite heavily, we get both
$ababab\dotso = (ab)(ab)(ab)\dotso = eee\dotso=e$
and 
$abababa\dotso = a(ba)(ba)(ba)\dotso = aeee\dotso= a(eee\dotso)=ae=a$,
hence $a=e$. 

Since $abababa\dotso $ is defined, 
we get from
 Remark \ref{conseqc}
that 
$bababa\dotso $ is defined.
We can now perform a symmetrical argument to get
$b=e$.  
\end{proof}

\arxiv{
\begin{remark*} \labbel{abebarmk}
The assumption $be=b$  is necessary in Proposition \ref{abeba}.
As a counterexample,
just let $S= \{ \, b, e  \,\}$ with $e$ fully absorbing and 
$b$ fully idempotent. We get a complete infinitary semigroup
and, if we take  $a=e$, all the remaining identities are satisfied.
 \end{remark*}     
}

A typical example, or perhaps \emph{the} typical
example of a semigroup in the classical sense
is a set $S$ of  functions
from some set $X$ to itself, with $S$ 
 closed with respect to  the operation of composition.
Proposition \ref{abeba}  shows that, in general, such a semigroups $ S$ 
cannot be extended to a $ {\leq}\omega$-semigroup,
under some reasonable requests.
For example, this is the case
when $S$ contains the identity function $e$ and a nonidentical 
bijective function together with its inverse,
and we require that an  infinite power of $e$ 
still gives $e$.

 In particular, if $ \vert   X \vert   \geq 2$
 and $S$ contains \emph{all}
the functions from $X$ to $X$, 
then $S$  cannot be extended to a $ {\leq}\omega$-semigroup,
under the above request about powers of the
identity.
On the other hand,
a natural expansion is possible
when $S$ contains only particular kinds of functions.

Recall that a \emph{chain}  in a partially ordered set
is a linearly ordered subset. A partially ordered set
is \emph{(upper) chain complete} if
 every chain has a least upper bound.
It is \emph{chain ${<}\lambda$-complete} if
 every chain of cardinality $<\lambda$ has a least upper bound.
 For our purposes, it is not important whether or not we include the empty 
chain in the above condition.
It is well-known that there are other  conditions equivalent
to chain completeness, but chain completeness is
what we will actually  need  here.
See \cite{Mar} for further information.

\begin{example} \labbel{incr}
Suppose that $X$ is a chain-complete partially ordered set
 and let 
$F$ be the set of all the functions
$f:X \to X$ such that $f(x) \geq x$,
for every $x \in X$.

We will define
 inductively $\prod _{i < \delta } f_i $, for
every ordinal $\delta $ and every  sequence 
$( f_i) _{i \in \delta  } $ of members of $F$,
simultaneously checking that 
 $\prod _{i < \gamma   } f_i  \leq \prod _{i < \delta } f_i $,
for 
$ \gamma  \leq \delta $, in particular, also
 $ \prod _{i < \delta } f_i \in F$. 

If $\delta = 0 $, let  $\prod _ \emptyset  f_i = e $,
the identity function on $X$, noticing that $e \in F$. 

 If $\delta= \varepsilon +1$,
let
$\prod _{ i < \delta } f_ i=
 \left( \prod _{ i < \varepsilon  } f_ i \right) \circ  f_\varepsilon $,
where $\circ$ denotes \emph{composition}.
Since 
$f_ \varepsilon  \in F$,
we have 
 $\prod _{ i < \varepsilon  } f_ i \leq 
  \prod _{ i < \delta  } f_ i $.
By the inductive hypothesis, 
 $\prod _{ i < \gamma   } f_ i \leq 
  \prod _{ i < \varepsilon } f_ i $, 
for every $\gamma \leq \varepsilon $.
Hence 
 $\prod _{ i < \gamma   } f_ i \leq 
  \prod _{ i < \delta  } f_ i $, 
for every $\gamma \leq \delta  $.
 
If $\delta$ is limit, then let 
$\prod _{ i < \delta } f_ i = 
\sup _{ \varepsilon < \delta } \prod _{ i < \varepsilon  } f_i$.
This is well-defined,
since $X$ is chain complete and, by the inductive hypothesis,
$ \left( \prod _{ i < \varepsilon  } f_i \right) _{ \varepsilon < \delta } $ 
is a chain.
It is then straightforward   that
 $\prod _{ i < \gamma   } f_ i \leq 
  \prod _{ i < \delta  } f_ i $, 
for every $\gamma \leq \delta  $.

 It is not difficult to prove that 
$(F, \prod)$ is an ordinal semigroup.

If $\lambda$ is 
an infinite cardinal and 
$X$ is only assumed to be chain  ${<} \lambda$-complete,
the above induction can be carried over for every 
ordinal $\delta < \lambda $,
and in this way  $(F, \prod)$ is a  ${<} \lambda$-semigroup.
 \end{example} 

Note that, in the above example, if $f,g \in F$
and $f$ and $g$ are one the inverse of the other,
then necessarily $f=g$ is the identity function.
Thus Example \ref{incr} does not contradict 
Proposition \ref{abeba}.   

\begin{remark} \labbel{narr}
If we give up the assumption
that, say, infinite powers of some idempotent $e$  still coincide with
$e$, then every  semigroup in the classical sense
can be actually extended to a complete semigroup.
Just add a new absorbing element $ \Omega $ ``at infinity''
and set to $ \Omega$ the result of every infinite product.

The argument
has a general formulation 
which applies to
infinitary semigroups, as well.
We can turn any partial infinitary semigroup $S$ 
to a complete one by adding a new  element, provided
that $S$ satisfies the following property.

  \begin{enumerate}
   \item[\narr] \emph{Whenever  $\pi:I \to J$ is a surjective  order
preserving map
and  all the products on the right-hand side of the equation 
$\prod _{i \in I} a_i = \prod _{j\in J} \prod _{\pi(i) =j }  a_i$ 
 are defined,
then also the product on the left-hand side is defined 
(hence equality holds, by (N))}.
  \end{enumerate}

Note that every (finitary) semigroup in the classical sense satisfies \narr.
 \end{remark}

\begin{example} \labbel{addinf}
Suppose that $(S, \prod)$ is a partial infinitary semigroup satisfying
\narr\ and $ \Omega$ is a new element not in $S$.
Define $\prod'$ on $S'=S \cup \{ \Omega \} $  by the following rule.
\begin{equation}\labbel{cuc}   
\sideset{}{'}\prod _{i \in I} a_i  =\begin{cases}
\prod _{i \in I} a_i &    \text{if  $ \prod _{i \in I} a_i  $ is defined},\\
\Omega  &    \text{otherwise}
\end{cases}
\end{equation}     
 (in particular, the second clause applies, if $a_i= \Omega $,
for some $i \in I$).

Then $(S', \prod')$ is a complete semigroup extending 
$(S, \prod)$. 
\arxiv{Compare Definition \ref{hom}.

In particular,
every ordinal semigroup in the strict sense
(that is,  no other product is defined,
besides those products giving the ordinal structure)
can be extended
to a complete semigroup,
since it satisfies \narr.
The same applies to
${<}\lambda$-complete semigroups and 
${<}\lambda$-semigroups, for 
$\lambda$  an infinite regular cardinal.
This is generally true also for ${<}\gamma$-semigroups,
when $\gamma$ is an ordinal, as we will show in 
\cite{ordsmg}; 
however this is highly nontrivial, since,
when $\gamma$ is not a cardinal,
${<}\gamma$-semigroups in the strict sense
do not satisfy \narr\ (of course, in this case we will need
a definition more elaborate than \eqref{cuc}). }
 \end{example}

Of course, the assumption that \narr\ is satisfied is 
needed in order to perform the construction in 
Example \ref{addinf}. Indeed, if everything in
$ \prod _{j\in J} \prod _{\pi(i) =j }  a_i$  is defined
and we set $\prod _{i \in I} a_i = \Omega $,
then (N) necessarily fails, since $ \Omega \not \in S$.
However, \narr\ is not a necessary condition
for  extendability to 
a complete semigroup, 
\arxiv{as it follows already from the example
of $< \gamma$-semigroups mentioned in  \ref{addinf}.
See also the discussion in Remark \ref{pvar}(b).}

Let us say that an element 
$ \Omega$ in a partial infinitary semigroup 
is \emph{fully absorbing} if 
$\prod _{i \in I} a_i =  \Omega $,
whenever $\prod _{i \in I} a_i $  is defined and 
$a_i= \Omega $, for some
$i \in I$. 
Thus $ \Omega$ in 
Example \ref{addinf}
is fully absorbing.
As we will see in Theorem \ref{krob} below,
any set which is a complete commutative semigroup 
necessarily contains
a fully absorbing element. 
In the next examples 
we show that this is not necessarily the case for noncommutative
complete semigroups. This shows that the
noncommutative theory is 
somewhat richer and less trivial.

Of course, as soon as we allow some infinitary operation
together with some very weak forms of 
 associativity, we get that absorption phenomena necessarily occur.
E. g., if $w=aaa\dots$ is defined,
 then  necessarily $aw=a(aaa\dots)=aaaa\dots=w$,
under a poor man's version of (N).  

\begin{example} \labbel{left}
   Suppose that $S$ is a nonempty set and fix $s_0 \in S$.
For an ordered set $I$, define $\prod _{i \in I } a_{i}$ 
to be $s_0$, if $I$ has no minimum, and 
$a_i$, if $i$ is the minimum of $I$.
Then $S$ becomes a complete semigroup.  
 \end{example} 

In Example \ref{left},
$s_0 * a $ holds, for every $a \in S$,
but $a*s_0= s_0$ if and only if $a= s_0$,
thus $s_0$ is only partially absorbing.
Even lighter absorption phenomena occur in the next example.

\begin{example} \labbel{lr}
 Suppose that $S$ is a  semigroup (in the classical sense),
 $s_0, s_1 \in S$ and 
$S$ 
satisfies the identity 
$a*b*c=a*c$,
for every $a, b, c \in S$. 

Then $S$ can be made into a complete semigroup by letting, for 
every ordered set $I$:
\begin{equation*}
\prod _{i \in I } a_{i} =\begin{cases}
s_0 *  s_1 &    \text{if  $ I  $ has no minimum and no maximum},\\
a_i *  s_1 &    \text{if  $i$ is the minimum of  $ I  $ and $I$ has no maximum},\\
 s_0 * a_j &    \text{if  $ I  $ has no minimum and $j$ is the maximum of $I$}\\
a_i * a_j &    \text{if $i \not=j$,  $i$ is the minimum and $j$ is the maximum of $I$} \\
a_i  &    \text{if  $I= \{ i \} $ } \\
\end{cases}
   \end{equation*}

Note that the binary  operation on $S$ is preserved.
 \end{example} 

Note that  Example \ref{left}
is the particular case of  Example \ref{lr}  in the special 
case of a semigroup $S$ satisfying  $a*b=a$, for every $a, b \in S$.

We conclude this section by constructing further semigroups starting from
some given ones.

\begin{example} \labbel{subs}
Let $S$ be a semigroup (in the classical sense) and
let $\mathcal P(S)$ denote the set of all subsets of $S$.
One can turn $\mathcal P(S)$ into a complete semigroup by
defining the following
product, for every linearly ordered set $I$ and every  sequence 
$( X_i) _{i \in I} $ of subsets of $S$.

$\prod _{i \in I} X_i =  \{ s_1*s_2* \dots *s_n \mid
n >0, \  i_1 < i_2 \dots < i_n \in I \text{ and }
 s_1 \in X _{i_1}, \dots , \ s_n \in X _{i_n}   \}$ 

Note that, except for a
few very special cases,  the canonical inclusion
which sends $s \in  S$ 
to $\{ s \} \in \mathcal P(S) $ 
is \emph{not} a semigroup homomorphism, since, 
for example,
$s*t$ goes to 
$\{ s*t \}$, which is generally different
from 
$\{ s \} *\{ t \} = \{ s,t,s*t \}  $. 

Note that $S$, as a subset of itself,
is a fully absorbing element
in the above example.

In the above construction we can  consider only
 nonempty subsets of $S$, still getting a complete semigroup.
 \end{example} 

We can modify the above construction in such a way that
 $s \mapsto \{ s \} $ 
gives a homomorphism, if we  start with some
infinitary semigroup.

\begin{example} \labbel{subs2}
Let $(S, \prod)$  be a partial infinitary semigroup 
in the sense of Definition \ref{pnc}
and suppose that $(S, \prod)$ 
satisfies property \narr\ from Remark \ref{narr}. 
We can turn $\mathcal P(S)$ into a complete semigroup 
 by
defining the following
operation $\prod'$. For every linearly ordered set $I$ and every  sequence 
$( X_i) _{i \in I} $ of subsets of $S$,
 we set
$\prod' _{i \in I} X_i =
\{ \prod _{i \in I} s_i \mid s_i \in X_i \text{, for every } i \in I \text{ and  }  
\prod _{i \in I} s_i \text{ is defined} \} $. 

Thus, in case $\prod _{i \in I} s_i $ is never defined
for every choice of the sequence of the $  s_i \in X_i $, we  
get $\prod' _{i \in I} X_i = \emptyset $.
In particular, in contrast with the above example, here
 the empty set is a fully absorbing element.

The assumption that $(S, \prod)$ satisfies \narr\ is necessary,
since if $s_i \in X_i$,
for $i \in I$,
 $\prod _{i \in I} s_i $ is not defined,
but all the  products in the expression  $   \prod _{j\in J} \prod _{\pi(i) =j }  s_i$ 
are defined, then, according to the above definition,
we would have 
that
$   \prod _{j\in J} \prod _{\pi(i) =j }  s_i$ belongs to 
$   \prod _{j\in J} \prod _{\pi(i) =j }  X_i$,
but not necessarily to
 $\prod _{i \in I} X_i $.
 \end{example} 

\arxiv{
Standard algebraic constructions apply to 
partial infinitary semigroups.

\begin{example} \labbel{pointw}
(a) If $( S_z) _{z \in Z} $ is a sequence of partial infinitary semigroups,
then $\prod _{z \in Z} S_z $ becomes a partial infinitary semigroup,
with the product defined pointwise, in the sense that if
$ \bar{a_i} = (a_{i,z}) _{ z \in Z} $, for  $i \in I$, then 
$\prod _{i \in I} \bar a_{i} $ is defined if and only if     
each $\prod _{i \in I} a_{i,z} $ is defined, for $z \in Z$, with value, of course,
$ \left(  \prod _{i \in I} a_{i,z}  \right) _{ z \in Z} $.

If each $S_z$ is a complete (an ordinal) semigroup, then 
$\prod _{z \in Z} S_z $  is a complete (an ordinal) semigroup.
Ordinal and cardinal restricted properties are preserved for
products with appropriately small index set.

(b) If $(S, \prod)$ is a partial infinitary semigroup and
$ \emptyset \neq T \subseteq S$, then $T$ becomes a partial infinitary semigroup
with the product $\prod^T$, letting $\prod^T _{i \in I} a_i $ 
to be  defined (of course, equal to $\prod _{i \in I} a_i $)
 exactly when,  
for every convex subset $J$ of $I$, $\prod _{i \in J} a_i $ is 
defined and belongs to $  T$.
Here completeness is lost, unless $S$ is already complete
and the subset $T$ is closed under applications of $\prod$.

(c) Suppose that  $(S, \prod)$ is a partial infinitary semigroup,
$  T \neq \emptyset $ and $\varphi : S \to T$ is a function
such that

(*) for every linear order $I$ and sequences
$( a_i) _{i \in I} $, $( b_i) _{i \in I} $, if 
$\varphi ( a_i) = \varphi ( b_i)$, for every $i \in I$ and both 
$a=\prod _{i \in I} a_i $ and $b=\prod _{i \in I} b_i $ are defined,
then $\varphi(a)= \varphi (b)$.  

Then $T$ becomes a partial infinitary semigroup
by letting $\prod^T _{i \in I} c_i $ 
to be  defined if there is a sequence $( a_i) _{i \in I} $
such that  $\varphi ( a_i) =  c_i$, for every $i \in I$,
and $a=\prod _{i \in I} a_i $ is defined.
If this is the case, set $\prod^T _{i \in I} c_i  = \varphi (a)$.

Note that we are dealing with partial operations,
hence we need not necessarily assume that $\varphi$  is 
surjective. Of course, in case $\varphi$  is not surjective, we must also 
deal with condition (U), so that we must let $\prod^T _{i \in I} c_i $
be defined also in case $\vert I \vert=1$, in which case 
the value of $\prod^T _{i \in I} c_i $ is given by (U).
 \end{example} }

\section{Comparison with the commutative case} \labbel{commu} 

\begin{definition} \labbel{commut}
We say that a partial infinitary semigroup is (fully)
\emph{commutative} if, besides (N) and (U), it satisfies also the following  condition. 

(C) \emph{If $f:I \to J$ is a
bijection (not necessarily respecting the order) 
and $b _{f(i)}  = a_i$, for every $i \in I$, 
then 
$\prod _{i \in I} a_i $ is defined 
if and only if 
$\prod _{j \in J} b_j $ is defined and, if defined,
they are equal.}
\end{definition}   

If condition (C) is satisfied, we can disregard the orders 
on $I$, $J$ etc. Hence
a partial infinitary commutative semigroup  in the present terminology
is essentially the same as 
a $\Sigma$-algebra satisfying (P) and (U)
in the terminology of Hebisch  and    Weinert \cite[Section IV.1]{HW}.
Note that any one-element set carries exactly one order on it,  
hence property 
(U) here is essentially the same as property (U) from \cite{HW}.

In particular, any commutative complete monoid, a \emph{mono\"\i de complet}
in the terminology from  Krob \cite[Definition II.1]{K}, can be thought
of as a complete infinitary semigroup in the sense of 
Definition \ref{pncx}.
In particular, this is the case for 
every complete lattice (with either meet or join as an operation).
 \arxiv{See, e.~g., \cite{GLT}  for 
further information about complete lattices.}

There are many possible nonequivalent definitions of
an identity element already in the commutative case. See \cite{HW}.
We give here a definition which appears to be the 
strongest possible one. The next definition is intended to apply
to the noncommutative case, as well.  

\begin{definition} \labbel{complid}
If $S$ is a partial infinitary semigroup in the sense of 
Definition \ref{pnc},
we say that $e \in S$ is a \emph{complete identity}  
if the following condition holds.
  \begin{enumerate}
   \item[(Id)]
\emph{If $\prod _{i \in I} a_i $ is defined
and  the (not necessarily convex) subset $H = \{ i \in I \mid a_i \not= e \} $
of $I$ has the induced order, 
then $\prod _{i \in H} a_i $ is defined, too, and
$\prod _{i \in I} a_i = \prod _{i \in H} a_i $.}
  \end{enumerate}

Since (U) implies that if $ \vert  I \vert  =1 $ then
 $\prod _{I } e $ is defined and equal to $  e$,
we get from (Id) that if $e$ is a complete identity for $S$, then
 $\prod _{\emptyset } $ is defined and $\prod _{\emptyset } = e $.
In particular, if some complete identity exists,  it is unique.

Moreover, if $S$ has some complete identity, then, 
by the above arguments, a more general version of Property (N)
holds, to the effect that there is no need to assume that $\pi$ is surjective.
 \end{definition}   

\begin{example} \labbel{agge}
 Suppose that $(S, \prod)$ is a partial 
infinitary  semigroup,  choose some new
element $e$ 
which does  not belong to $S$ and let 
$S'= S \cup \{ e \} $.
Then a partial infinitary product $\prod'$
can be defined  on
$S'$ 
in such a way that  $e$ is a complete identity of  $(S', \prod')$ 
and, moreover, if $I  \not= \emptyset   $
and $( a_i) _{i \in I} $ is a sequence of elements in $S$,
then
$\prod _{i \in I} a_i $ is defined if and only if
$\prod' _{i \in I} a_i $ is defined and, in case they are defined,
they are equal.

To prove the above assertion, if
 $( a_i) _{i \in I} $ is a sequence of elements of $S'$, 
 let
$\prod' _{i \in I} a_i $ be defined
if and only if either

(i) $a_i=e$, for every $i \in I$ (including the case $I= \emptyset $),
in which case set $\prod' _{i \in I} a_i = e$, or

(ii) $\prod _{i \in H} a_i $ is defined in $(S, \prod)$,
where  $H = \{ i \in I \mid a_i \not= e \} $ is nonempty.
In this case set
$\prod' _{i \in I} a_i = \prod _{i \in H} a_i$. 

Note that if $\prod_ \emptyset  $ is not defined,
then $(S', \prod')$ is an extension of $(S, \prod)$, but if  
$\prod_ \emptyset  $  is defined, then
$\prod'_ \emptyset  \not= \prod_ \emptyset  $.
Moreover, if the starting semigroup $(S, \prod)$ already had a complete 
identity, say, $ e_{\Pi}$, then 
$ e_{\Pi}$ ceases to be an identity in $(S', \prod')$,
since, for example, $ e_{\Pi}= e * e_{\Pi}$. 
 \end{example}

\begin{example} \labbel{agge2}     
By combining Examples \ref{addinf} 
and \ref{agge}, we get that
every partial infinitary semigroup satisfying
\narr\ can be turned into a complete semigroup
with a complete identity by adding two new elements. 
The procedure can be done in any order.
 \end{example} 

Krob \cite[Proposition II.3]{K} 
shows that every complete commutative
semigroup $S$ has an absorbing element. Note
that the assumption that $S$ is a monoid is not used in Krob's argument.
In fact,
the argument gives the following more refined result.

\begin{theorem} \labbel{krob}  
\cite{K} If $ \omega < \kappa $,  $S$ is
a ${<} \kappa  $-complete commutative
semigroup and $ \vert  S \vert   <\kappa $,  then
there is  $ \Omega \in S$ which is a \emph{${<}\kappa$-absorbing element}
in the strong sense that
$\prod _{i \in I} a_i =\Omega$,
whenever $ \vert  I \vert   < \kappa $ and  
$ a_i=\Omega$, for some $i \in I$.  
 \end{theorem}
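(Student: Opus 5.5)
The natural candidate is the product of everything, taken with enough repetition. Since $\vert S\vert<\kappa$ and $\omega<\kappa$, the cardinal $\mu=\max(\vert S\vert,\omega)$ satisfies $\mu<\kappa$. Also $\vert S\times\omega\vert=\mu$. We therefore define
\[
\Omega=\prod_{(s,n)\in S\times\omega} s ,
\]
that is, the product of the sequence indexed by $S\times\omega$ whose $(s,n)$-th term is $s$. The order on the index set is irrelevant by (C). This product is defined because $S$ is ${<}\kappa$-complete. In words, $\Omega$ is the product in which every element of $S$ occurs $\omega$ times.

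The first step is to show that $\Omega$ absorbs single elements, i.e.\ $a\Omega=\Omega$ for every $a\in S$. Adding one more copy of $a$ to the index set $S\times\omega$ yields an index set of the same cardinality in which each element still occurs exactly $\omega$ times. By (C), the product over the enlarged index set equals $\Omega$. On the other hand, by (N) with a two-block partition, followed by (U), the same product equals $a\,\Omega$. Hence $a\Omega=\Omega$.

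The main step is the general case. Suppose $\vert I\vert<\kappa$, $a_{i_0}=\Omega$, and $\prod_{i\in I}a_i$ is defined; this last condition is automatic by completeness. Expand the factor $a_{i_0}$ into its defining product and use (N) to write
\[
\prod_{i\in I}a_i=\prod_{k\in K}b_k ,
\]
where $K=(I\setminus\{i_0\})\sqcup(S\times\omega)$ and the $b_k$ list every element of $S$ $\omega$ times, together with the extra elements $a_i$ for $i\ne i_0$. We have $\vert K\vert<\kappa$, and (C) lets us regard $K$ as unordered. Now let $\nu=\max(\vert I\vert,\omega)$. Consider the family in which each $s\in S$ occurs with multiplicity $\nu$, and call its product $\Omega_\nu$; this is defined since $\vert S\vert\cdot\nu<\kappa$. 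The family $(b_k)$ is a subfamily in which each element occurs at least $\omega$ and at most $\nu$ times. Partition $K$ into countably infinite blocks, one (or several) per element, arranged so that after regrouping via (N) and (C) the product becomes a product over $S\times\nu$ of the elements themselves. Concretely, I would show that any family in which every $s\in S$ occurs with an infinite multiplicity $\le\nu$, and each multiplicity has the same cardinality as a set of the form $\omega\times X$, can be rearranged by (C) into the standard family. The cleanest route is to show directly, via (C), that the product of a family in which each $s$ occurs $\lambda_s$ times, with $\omega\le\lambda_s<\kappa$, equals $\Omega$. For this, observe that the index set $S\times\omega$ can be bijected with the disjoint union of sets of size $\lambda_s$ only when all $\lambda_s$ coincide with $\omega$, so we need a different trick for uncountable multiplicities.

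The trick is a regrouping argument. Write the $\lambda$ copies of $s$ as $\lambda$ blocks, each consisting of $\omega$ copies of $s$. Alternatively, regroup $\Omega=\prod_{S\times\omega}s$, using $\omega=\omega\times\omega$, as $\prod_{n\in\omega}\Omega$, so that $\Omega^{\omega}=\Omega$ for countable powers. By a further regrouping with $\lambda\times\omega\cong\lambda$, any power $\Omega^{\lambda}$ with $\omega\le\lambda<\kappa$ equals the product in which each $s$ occurs $\lambda$ times. Combining these identities gives the general absorption. I expect this bookkeeping with multiplicities, in particular matching arbitrary cardinalities $\lambda_s$ to a single standard family, to be the main obstacle. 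The fallback is to argue as Krob does: $\Omega$ absorbs any product $x$ of fewer than $\kappa$ elements, since $\Omega=\Omega\,\Omega^{\lambda}$ can be chosen to swallow the factors of $x$ by (C).
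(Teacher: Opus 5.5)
Your first step is correct and is essentially the paper's. By (C), adding one copy of $a$ (the paper instead removes one) to a family in which every element of $S$ occurs $\aleph_0$ times does not change the product. Then (N) with a two-block partition, plus (U), turns this into $a\Omega=\Omega$.

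The gap is in your main step. For countable $I$ your expansion does finish the job: every $s$ still occurs exactly $\aleph_0$ times in $(b_k)_{k\in K}$, so (C) gives $\prod_{k\in K}b_k=\Omega$. For uncountable $I$, which is possible when $\kappa>\omega_1$, the multiplicities in $K$ need not be uniform. For example, take all $a_i$ with $i\neq i_0$ equal to a fixed $s$. The identities you actually derive, $\Omega^\omega=\Omega$ and $\Omega^\lambda=\prod_{S\times\lambda}s$, only concern uniform multiplicities. The sentence ``combining these identities gives the general absorption'' is not backed by any argument, and you say yourself that this bookkeeping is the obstacle. Moreover, the natural way to close it is to split off a block $S\times\omega$ from $K$, and that just brings you back to a product $\Omega\,y$ with $y\in S$.

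The missing idea is that your first step holds for \emph{every} $a\in S$, including an element that happens to be a product. So no expansion of $\Omega$ is needed at all:
\begin{enumerate}
\item[(i)] If $I=\{i_0\}$, then $\prod_{i\in I}a_i=\Omega$ by (U).
\item[(ii)] Otherwise $I\setminus\{i_0\}$ is nonempty of cardinality $<\kappa$. By ${<}\kappa$-completeness, $x=\prod_{i\neq i_0}a_i$ is defined and is an element of $S$.
\item[(iii)] Use (C) to move $i_0$ to the front, then (N) with the two blocks $\{i_0\}$ and $I\setminus\{i_0\}$, together with (U). This gives $\prod_{i\in I}a_i=\Omega x=x\Omega=\Omega$, by your first step with $a=x$.
\end{enumerate}
This is exactly the paper's second paragraph. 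Your ``fallback'' sentence points in this direction, but its justification (``$\Omega=\Omega\,\Omega^{\lambda}$ can be chosen to swallow the factors of $x$'') is neither established nor needed. You can discard $\Omega_\nu$, $\Omega^\lambda$ and the multiplicity matching entirely.
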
 

\begin{proof} 
Let $J= S \times \mathbb N $,
take $b_j = s$, if $j=(s, n)$,
and put 
$\Omega = \prod _{j \in J} b_j $.
Note that 
$ \Omega$ is defined, since
$ \vert  J \vert   = \max( \vert  S \vert  , \omega ) < \kappa $. 

Then (N), 
(U) and (C)
trivially imply that 
$ \Omega = \prod _{j \in J} b_j= 
 \prod _{j \in J \setminus (a,0)} b_j * a=   \Omega * a$,
for every $a \in S$,  
since, by  definition, the element $a$ occurs infinitely many times
in the sequence $( b_j) _{j \in J} $,
hence 
$ \prod _{j \in J \setminus (a,0)} b_j =
 \prod _{j \in J} b_j =  \Omega $. 

Next, consider a product $\prod _{i \in I} a_i$
such that, say, $a_{\bar\imath } = \Omega$.
If $ I = \{  \bar\imath\}$,
then $\prod _{i \in I} a_i = a_{\bar\imath } = \Omega$
by (U).  
Otherwise, we have
$\prod _{i \in I} a_i = a_{\bar\imath }  * \prod _{i \in I, i \not=\bar \imath} a_i 
= \Omega  * \prod _{i \in I, i \not=\bar \imath} a_i  = \Omega $,
by the previous paragraph, taking
$a=  \prod _{i \in I, i \not=\bar \imath} a_i$.  
\end{proof}

Note that the assumption
$ \omega < \kappa $ in Theorem \ref{krob}
is necessary, since every finite group
is (can be thought of as) a ${<} \omega $-complete semigroup,
but only one-element groups have an absorbing element. 

Statements  analogous to Theorem \ref{krob} 
fail in the  case of (noncommutative) ordinal semigroups,
as can be witnessed, e.~g., in Examples \ref{left} and \ref{lr}.
It is an open problem whether some weaker version of   
Theorem \ref{krob} holds also in the
noncommutative case. 

\arxiv{
Note that the assumption
$ \vert  S \vert   <\kappa $
is optimal in Theorem \ref{krob}.
The  naturals,
or even the integers form a commutative
${<} \omega $-semigroup with no absorbing element.
If $\kappa$ is a weakly (strongly) inaccessible cardinal,
then the set of all cardinals $\leq \kappa $,
with cardinal sum (cardinal product), is a commutative
${<} \kappa  $-semigroup with no absorbing element.
 If we are willing to accept proper classes
as the base set for an infinitary semigroup,
the class of all cardinals is a complete semigroup
with no absorbing element, either with
cardinal sum or cardinal product.

Example \ref{agge}  shows that we can always add
some ``external''  complete identity to an infinitary semigroup,
allowing the value of  $\prod _{\emptyset }$ to be changed. }
Proposition \ref{abeba}
shows that there are many obstructions which prevent  adding
some infinitary structure to, say, a finitary monoid,
in case we want to expand the operation only, without adding further elements.
In particular, by Proposition \ref{abeba},  
in the commutative case we cannot have an infinitary identity 
$e$ together with two elements $a$, $b$   such that $ab=e$, apart from the trivial case
$a=b=e$. 
The next theorem  shows that 
this can happen in the noncommutative case, even allowing  $e$
to be  a complete identity. 

\begin{theorem} \labbel{abe}  
There is  a  complete semigroup $(S, \prod)$ 
such that $S$ is a countable set and there are
elements $a,b, e \in S$  such that 
$ a  \neq e$, $ b \neq e$,
$ab=e$ and
 $e$ is a complete identity in the strong sense of
Definition \ref{complid}. 
\end{theorem}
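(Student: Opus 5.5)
The approach is to build $S$ from the bicyclic monoid. Let $B$ be the monoid with generators $a,b$ and the single relation $ab=e$. Every element of $B$ has a unique normal form $b^m a^n$ with $m,n\in\mathbb N$, and $ba\neq e$. Hence $a\neq e\neq b$, and $B$ is countable. Proposition \ref{abeba} does not apply, because $ba\neq e$. We then set $S=B\cup\{\Omega\}$, where $\Omega$ will be a fully absorbing element, and extend the finitary product of $B$ to all linearly-indexed sequences. The finite products are the bicyclic ones. The main point is to define infinite products so that $e$ stays a complete identity and (N) holds. The naive choice, where every sequence with infinitely many factors $\neq e$ goes to $\Omega$, violates (N): $(ab)(ab)(ab)\dotso = eee\dotso$ must equal $e$. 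So the definition has to reflect cancellation.

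The definition is by a ``parenthesis matching''. Given $(x_i)_{i\in I}$ with no $x_i=\Omega$, replace each $x_i=b^m a^n$ by the finite word $b\cdots b\,a\cdots a$. This gives a linearly ordered set $L$ of letters, the ordered sum of the finite pieces, and the $e$'s disappear. Read $a$ as an opening bracket and $b$ as a closing bracket. For positions $p<q$ in $L$ with $p$ labelled $a$ and $q$ labelled $b$, say that $p$ and $q$ are \emph{matched} when the interval $(p,q)$ of $L$ is finite and balanced as a finite bracket word. A $b$ at $q$ is \emph{unmatched} if no $p<q$ matches it, and similarly for an $a$. We declare $\prod_{i\in I}x_i=b^m a^n$ when the following conditions hold: every letter is either matched or unmatched; all unmatched $b$'s precede all unmatched $a$'s; and there are exactly $m$ unmatched $b$'s and $n$ unmatched $a$'s, with $m,n$ finite. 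Otherwise, or if some $x_i=\Omega$, the product is $\Omega$. This is a total operation, so $S$ is complete. Property (U) and (Id) are immediate, since $e$ contributes no letters. On finite sequences the definition agrees with the bicyclic product.

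Under this rule $ababab\dotso=e$, $baba\dotso=b$, $bbb\dotso=\Omega$ and $aaa\dotso=\Omega$. In particular $a\cdot(baba\dotso)=ab=e$, as (N) requires. Whether finite-interval matching is the right notion, as opposed to allowing infinite balanced intervals, is not yet settled. I will commit to finite intervals and check it against these test cases and against sequences indexed by $\zeta$ and $\omega^*$.

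The main obstacle is (N). Let $\pi:I\to J$ be surjective and order preserving. The letter order $L$ of the whole sequence is the ordered sum of the letter orders $L_j$ of the blocks. If some block gives $\Omega$, I must show the whole product gives $\Omega$; this uses that a failure of the conditions inside a convex piece persists in $L$. If no block gives $\Omega$, each block $L_j$ reduces to $b^{m_j}a^{n_j}$. The key lemma is that the matching on $L$ restricts to the matching on each $L_j$ together with a matching on the reduced sequence $(b^{m_j}a^{n_j})_{j\in J}$. Proving this needs the finite-interval condition: a matched pair in $L$ splitting across blocks has only finitely many letters between its endpoints, so only finitely many intermediate blocks are involved, and there ordinary finite bicyclic associativity applies. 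I expect the delicate part to be the converse, namely showing that if the reduced sequence satisfies the conditions then so does $L$, and I would argue it by a case split on whether the blocks are finite or infinite. If this proves unmanageable, the fallback is to set more products to $\Omega$. One would then check (N) only in the direction ``defined and not $\Omega$ on one side implies the same on the other'', and control the $\Omega$ cases by showing that the set of ``bad'' sequences is closed under both grouping and ungrouping.
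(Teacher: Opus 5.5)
There is a genuine gap: with matching restricted to finite intervals, your operation violates (N). The failure is exactly in the converse direction of your key lemma, the one you flagged as delicate.

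Consider the sequence indexed by $1+\omega+1$ with terms $a$, then $a,b,a,b,\dotso$, then $b$. Here $L$ is the index order itself. The inner pairs are matched. The first $a$ is unmatched, since every finite interval after it reads $a(ba)^k$. The last $b$ is unmatched, since every interval ending at it is infinite. So an unmatched $b$ follows an unmatched $a$, and your rule returns $\Omega$. But collapsing the middle block, which your rule evaluates to $e$, gives $a\cdot e\cdot b=ab=e$. Likewise, grouping the middle letters in pairs gives $a\,eee\dotso\,b$, which your rule evaluates to $ab=e$, because the $e$'s contribute no letters. The lemma overlooks that a block can be infinite and still reduce to a short word (here $e$). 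Hence two letters that are adjacent in the reduced sequence may be infinitely far apart in $L$.

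Your fallback of declaring more products to be $\Omega$ cannot repair this. In any complete semigroup containing $B$ in which $e$ is a complete identity, (N) and (Id) force $\prod_\omega(a,b,a,b,\dotso)=\prod_\omega e=e$. They therefore force the product above to be $e$, not $\Omega$. You need a notion of cancellation across infinite intervals, and not only well-ordered ones. For example, (N) and (Id) also force $\prod_\omega(ba)=b$ but $\prod_{\omega^*}(ba)=a$. So if you encode letters by a height function ($a=+1$, $b=-1$), at limit stages of $\omega$-sequences you must take the $\liminf$ rather than the $\limsup$ (because $baba\dotso=b$), with the mirror-image rule for reverse orders. You must also decide what happens for $\zeta$-like and dense letter orders. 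Formulating such a notion for arbitrary linear orders and verifying (N) for it is the real content of the theorem, and the proposal does not supply it.
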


See \cite{aberef}  for a proof.
Theorem \ref{abe} shows that in a complete infinitary
semigroup some elements might have either a left or
a right inverse. Of course, no nonneutral element 
has both  a left and
a right inverse, since just ternary associativity implies 
that such elements would be equal, but this is forbidden
by Proposition \ref{abeba}. 
Moreover, we cannot go too far, in particular,
it is not the case that \emph{every}  element has either a left
or right inverse.

\begin{proposition} \labbel{notut}
If $S$ is a partial infinitary semigroup,
$a,c,e \in S$, $a \neq e$, $e$ is a neutral element
(just in the finitary sense) and  
 the product $aaaa\dots\ c $ of length $ \omega+1$  is defined, then
it is not the case that $aaaa\dots\ c =e$.

In particular, if all ternary products are defined in $S$, 
$e$ is neutral and some product 
$aaaa\dots $ is defined, then
$aaaa\dots $ has not a right inverse, unless $a=e$. 
 \end{proposition}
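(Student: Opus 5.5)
Let $w = aaaa\dots$ denote the $\omega$-indexed product; it is defined by Remark \ref{conseqc}, being a convex initial segment of the defined product $aaaa\dots\,c$ of type $\omega+1$. The plan is to suppose $w c = e$ and derive $a=e$ by regrouping the product in two ways with (N).

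\textbf{Absorption.} First I get the absorption identity $a w = w$. Use the surjective order preserving map from $\omega$ onto $\{0,1\}$ sending $0\mapsto 0$ and $n\mapsto 1$ for $n\ge1$. By (N) and (Iso), $w = a * (aaa\dots) = a * w$.

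\textbf{Regrouping.} Next I regroup the length $\omega+1$ product. Map $\omega+1$ onto $\{0,1\}$ by sending $0\mapsto 0$ and everything else to $1$. The block $\{1,2,\dots,\omega\}$ has order type $\omega+1$ with the same entries, so by (N) and (Iso) all products below are defined and
\[
aaaa\dots\, c = a * (aaaa\dots\, c).
\]
Grouping instead the first $\omega$ entries into one block gives $aaaa\dots\,c = w * c$. So if $aaaa\dots\,c = e$, then $e = a * e = a$, since $e$ is neutral. This contradicts $a\neq e$. Here neutrality is used only in the binary sense, and the product $a*e$ is defined by (N). Note that the absorption step is in fact not even needed: the direct regrouping already suffices. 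The only care required is checking that the regroupings are legitimate instances of (N), that is, that the maps are surjective and order preserving and that the blocks are convex with the correct order types.

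\textbf{The ``in particular''.} Let $w = aaaa\dots$ be defined, and suppose $c$ is a right inverse, so $w c = e$, a binary product. By the main part, it suffices to show that the $\omega+1$ product $aaaa\dots\,c$ is defined. This is the main obstacle, since (N) only breaks products down and does not build them up. Assuming only that ternary products are defined, I would avoid the $\omega+1$ product altogether. Consider the ternary product $a * w * c$, which is defined. By (N), grouped as $(a*w)*c$ and as $a*(w*c)$, we get
\[
(a*w)*c = a*(w*c).
\]
Using the absorption identity $a*w = w$, the left side is $w*c = e$, and the right side is $a*e = a$. Hence $a = e$, which is the desired conclusion.
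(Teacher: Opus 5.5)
Your proof is correct and takes essentially the same route as the paper. For the first part you regroup the $\omega+1$ product as $a(aaa\dots\,c)$, giving $e=ae=a$. For the ``in particular'' you apply (N) to the defined ternary product $a\,w\,c$, using $aw=w$, to get $e=(aw)c=a(wc)=ae=a$.
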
 

 \begin{proof}
If $aaaa\dots\ c =e$, then, by applying (N), we  get
$e=aaaa\dots \ c = ^{\textit (N)} a(aaa\dots\ c)=ae=a $. 
Strictly formally, the second statement in the proposition 
does not
follow from the first, since it might happen that  
$aaaa\dots\ c $ is not defined in $S$.
However, were $c$ a right inverse
of  $aaaa\dots $, it is enough to apply (N) also to  a $3$-product,
getting 
$e=(aaaa\dots ) c = ^{\textit (N)} (a(aaa\dots)) c = ^{\textit (N)}
a((aaa\dots) c) = ae=a $.
\end{proof}

\section{Problems} \labbel{prob} 

Since we have provided a lot of examples
of partial infinitary semigroups,
the notion is surely of interest.
We briefly discuss here possible variations on the notion.

\begin{remark} \labbel{pvar}
(a)
On one hand, Property (N)
in Definition \ref{pnc}
is relatively weak
and one could suggest to add some further properties to it;
for sure, Property \narr\ from Remark \ref{narr} looks like a natural request. 
However, most of the results we have proved in the present note
follow just from (N) and (U).

(b) A problem which, so far, is not completely solved is
under which conditions an infinitary semigroup can be
extended to a complete infinitary semigroup.
A sufficient condition \narr\  has been provided in 
Example \ref{addinf} but, as already pointed out
there, we will show in \cite{ordsmg} that the condition
is not necessary. 

Clearly, if some   infinitary semigroup $S$ can be
extended to a complete infinitary semigroup, then $S$
satisfies the following condition.

  \begin{enumerate}
   \item[(Eq)]
\emph{The following identity holds
\begin{equation*}\labbel{Qeq}
 \prod _{j\in J} \prod _{\pi(i) =j }  a_i
 = \prod _{h \in H} \prod _{\pi'(i) =h }  a_i
  \end{equation*}     
whenever $\pi: I \to J$ and $\pi': I \to H$
are surjective and all the  products in the equation are defined.}
  \end{enumerate}

We expect that Condition (Eq) is not sufficient for 
extendability  to a complete infinitary semigroup and instead we need 
a ``multiproduct'' generalization of  (Eq).

(c) In another direction, it seems that Property (N)
can be split into two different properties, and that 
there are natural classes of structures
satisfying exactly one of these properties.
In detail, let (Nmax) be the condition
asserting that all the instances of (N) hold
in the case when $J$ has a maximum. Symmetrically, 
let (Nlim) be the instance of (N)
which deals with the cases when $J$ has no maximum.

One can also consider the symmetric conditions dealing with minima, instead.

(d) There are  situations in which (Nlim)
is satisfied, but (Nmax) is not satisfied,
for example, for infinitary natural sums of ordinals,
see Proposition 2.4(6) in \cite{w} and the comment shortly
after its proof.
The converse pattern occurs in a very general situation,
as we are going to see.
 \end{remark}   

\begin{definition} \labbel{inflim}    
Let $Q$ be a complete meet semilattice such that 
every  chain has a supremum. For example, 
the class $Q$ of ordinal-indexed strings with elements 
 from some set $X$, letting $s \subseteq  t$ if
$s$ is an initial substring of $t$, is a    
complete meet semilattice such that 
every  chain has a supremum.
Under the above assumptions on $Q$,
for every $I$-indexed sequence $( q_i) _{i \in I} $
of elements of $Q$, define 
the \emph{inferior limit}
$\inflim _{i \in I} q_i $ as 
  $\bigvee _{i \in I} \bigwedge _{h \geq i} q_h $.
This is a special case of a well-known definition 
\cite[Definition II-1.1]{CLD}.

Under the assumptions, $\inflim$ is defined,   
since if $i \leq i'$, then   
$\bigwedge _{h \geq i} q_h \leq  \bigwedge _{h \geq i'} q_h$,
so that, letting $i$ vary, the set of the   
$\bigwedge _{h \geq i} q_h$ is a chain.
If $I = \emptyset $ we set  $\inflim _ \emptyset $
to be the infimum of $Q$.
The terminology is justified since if 
$Q = \mathbb R \cup \{ -\infty, \infty \} $,
then we actually get a (generalization of) the inferior limit. 

If $I$ has a maximum $\bar{\imath}$, then 
$\inflim _{i \in I} q_i  = q _{\bar{\imath}} $.
In particular, the ``finitary version'' of $\inflim$
corresponds to taking the last element of a sequence;
essentially, it corresponds to the semigroup operation given by
$a * b=b$. 
Otherwise, $I$ has not 
a maximum and $\inflim _{i \in I} q_i $ is the largest element
of $Q$ which is ``eventually compatible'' with the $q_i$.  

For example, in the case of  ordinal-indexed strings
(or, more generally, families of strings indexed by a 
linearly ordered set $I$),
the \emph{string limit} $\inflim _{i \in I} s_i $ is the common prolongment
of all the strings $t$ such that $t$  is eventually a substring of the 
$s_i$. Say, the string limit of 
the sequence $abcc, ababcc, abababcc, \allowbreak ababababcc, \dots$
is the infinite string $ababababab\dots$, where juxtaposition denotes string
concatenation. The string limit is always defined, but
the limit might be a very short string; for example, if the first
element of the strings $s_i $ is not eventually constant, then 
the limit is the empty string. 
  This  string limit has been studied in \cite{LM} 
with correlations to surreal numbers. 
\arxiv{See also 
\cite[Section 5]{series}. }
\end{definition}

The just introduced   $\inflim$, considered
as an infinitary operation, 
 does not satisfy (N).
For example, as in a remark above, 
the string limit of the alternating sequence
$a, b, a, b, a, \dots $ of length $ \omega$ is the empty  
string. If we let $\pi: \omega \to  \omega$
send $i$ to  the integer part of $i/2$, then  
 $\inflim _{ \pi(i) =j} a_i  = b $, for every $ j \in \omega$,
thus  $\inflim _{ j \in \omega } \left( \inflim _{ \pi(i) =j} a_i \right) =b 
\neq \emptyset = \inflim _{ i \in \omega } a_i $. 

Note that the above argument also shows that
$\inflim$ cannot arise as a topological limit;
compare Example \ref{limits}(a). Indeed,  at least for
$\leq \omega $-indexed sequences, the latter limit does
satisfy (N). While the fact that  $\inflim$
does not satisfy (N) might be seen as a drawback,
  $\inflim$ has the advantage of being everywhere defined and, in any case,
it satisfies at least (Nmax) from Remark \ref{pvar}(c).

Turning to another argument, in \cite{trieste}
we associated some invariants to a partial commutative infinitary semigroup
with a specified subclass,
obtaining some (rather easy) topological and
set-theoretical consequences.

Can we generalize the notions from
\cite[Section 3]{trieste} to noncommutative partial infinitary semigroups?

As another problem, can we consider a notion of an infinitary
semigroup providing ``products'' of ``net'' sequences indexed  
by just a partially ordered set? Note that we essentially provided examples:
Examples \ref{limits}, \ref{ordex}, \ref{labeledord}
and \ref{rel}    can be generalized in the present
context. 

\arxiv{
In order to keep the following  list within a reasonable length,
in many cases we have  cited survey works
in place of the original sources.
The reader is advised
to consult the quoted works 
for further references and, in particular, credits for original discoveries.
}


\section{Appendix I. Ordinal-indexed sequences are enough (for well-ordered
index sets)} \labbel{subse} 

 We now expand on Remark \ref{conseqa}. 
We will treat here explicitly 
the particular case dealing with ordinal semigroups,
since we have used it in order to construct some examples,
e.~g., \ref{ord} and \ref{incr}. Moreover, 
we will make heavy use 
of this shorter construction in \cite{ordsmg}.
It is appropriate to fix all the details 
because the arguments in  \cite{ordsmg} are quite delicate
and
some risk of circular reasoning is present anyway,
since any infinite well-ordered set 
has a proper subset isomorphic to itself.
Hence we make sure to fully fix every detail,
and we will employ
the present appendix for that purpose.
However, the following discussion  is essentially some sort 
of a triviality; if the reader 
is convinced by the above informal arguments, he might skip
the present section.

\begin{proposition} \labbel{conseqb}
Suppose that $S$ is a class and $\prod$ is a partial
class operation defined on ordinal-indexed sequences
of elements of $S$. Suppose that 
$\prod$ satisfies Property (U) 
from \ref{pnc}
 when $\vert I \vert =1$, as well as the following condition.
\begin{enumerate}
   \item[(Ord)]
\emph{
Assume that  $\prod _{ \alpha  < \delta } a_\alpha  $ is defined,
 $\pi: \delta  \to \eta$ is a surjective  order preserving map  and,
 for every $j < \eta$,
let $I_j = \{ \alpha \in \delta \mid \pi ( \alpha ) = j \}
= [\beta_j, \beta_j + \varepsilon_j )  $.
Whenever the above assumptions are met,
we require that  
all the products  in
the following equation are defined
\begin{equation*}
\prod _{ \alpha  < \delta } a_ \alpha  = \prod _{j <\eta} 
\prod _{ \gamma < \varepsilon_j } a _{\beta_j + \gamma }
  \end{equation*}     
and equality actually holds.
}
   \end{enumerate} 

Let us  define $\prod'$ on $S$ by the following 
condition. If $I$ is well-ordered,
 $\delta$ is the unique ordinal isomorphic to $I$
 and $g: \delta \to I$
is the canonical order isomorphism, then 
 $\prod' _{i \in I} a_i $ 
is defined if and only if 
 $\prod _{ \alpha  < \delta } a_{g( \alpha )} $ 
is defined and, if this is the case,
 $\prod' _{i \in I} a_i $ 
is given the same value.

Then $(S,\prod')$
is  a partial infinitary semigroup
in the sense of Definition \ref{pnc}.
Moreover $\prod$ and $\prod'$
coincide on the class of all ordinal-indexed sequences, 
in the sense that one is defined if and only if 
the other is defined and, when defined, they give the
same outcome.
\end{proposition}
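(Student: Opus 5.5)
The plan is to verify (U) and (N) for $\prod'$ directly, after first establishing the coincidence claim, which is immediate. If $I=\delta$ is itself an ordinal, the canonical isomorphism $g:\delta\to\delta$ is the identity, because ordinals have no nontrivial automorphisms. Hence $\prod'_{\alpha<\delta}a_\alpha$ is defined exactly when $\prod_{\alpha<\delta}a_\alpha$ is, and then with the same value. Property (U) for $\prod'$ follows at once: a one-element $I$ is isomorphic to the ordinal $1$, so $\prod'_{i\in I}a_i=\prod_{\alpha<1}a_{g(\alpha)}=a_i$ by the assumed (U) for $\prod$. I would also record the fact that $\prod'$ satisfies (Iso) on well-ordered sets. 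If $f:I\to I'$ is an order isomorphism, both are isomorphic to the same $\delta$, and the canonical maps satisfy $g'=g\circ f$ by uniqueness of isomorphisms between well-orders. So the two defining ordinal sequences literally coincide.

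The main step is (N). Suppose $\prod'_{i\in I}a_i$ is defined, with $I$ well-ordered, and $\pi:I\to J$ is surjective and order preserving. Then $J$ is well-ordered: any nonempty subset of $J$ has a preimage in $I$ with a least element, and its image is least. Likewise each fibre $I_j$ is well-ordered as a subset of $I$. Let $g:\delta\to I$ and $h:\eta\to J$ be the canonical isomorphisms, and set $\pi^*=g\circ\pi\circ h^{-1}:\delta\to\eta$, which is surjective and order preserving. By the assumption, $\prod_{\alpha<\delta}a_{g(\alpha)}$ is defined. Applying (Ord) to it and $\pi^*$, each fibre $(\pi^*)^{-1}(j')=[\beta_{j'},\beta_{j'}+\varepsilon_{j'})$, and we obtain
\[
\prod_{\alpha<\delta}a_{g(\alpha)}=\prod_{j'<\eta}\prod_{\gamma<\varepsilon_{j'}}a_{g(\beta_{j'}+\gamma)},
\]
with all products defined.

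It remains to translate this equation back. For $j=h(j')$, the map $\gamma\mapsto g(\beta_{j'}+\gamma)$ is an order isomorphism from $\varepsilon_{j'}$ onto $I_j$. By the uniqueness remark above it is the canonical one, so the inner product equals $\prod'_{i\in I_j}a_i$, which is therefore defined. Writing $c_j=\prod'_{\pi(i)=j}a_i$, the outer product is $\prod_{j'<\eta}c_{h(j')}$. By definition of $\prod'$ via the canonical $h$, this is $\prod'_{j\in J}c_j$, so it too is defined. The left side is $\prod'_{i\in I}a_i$ by definition, which gives (N).

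The only delicate point, and the one I expect to need care, is the bookkeeping of the canonical isomorphisms. One must avoid any circularity stemming from the fact that a well-order can be isomorphic to a proper subset of itself. All of this is handled by the single fact that an isomorphism between well-orders is unique; I would state that fact explicitly and invoke it at each identification.
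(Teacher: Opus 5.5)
Your proposal is correct and follows the paper's proof. You transport $\pi$ via the canonical isomorphisms to an order preserving surjection $\delta\to\eta$, apply (Ord), and identify each block $[\beta_{j'},\beta_{j'}+\varepsilon_{j'})$ with the fibre $I_{h(j')}$, so that the inner and outer products are exactly those defining $\prod'$. The only difference is that you state two points the paper uses tacitly: that $J$ and the fibres are well-ordered, and that $\gamma\mapsto g(\beta_{j'}+\gamma)$ is the canonical isomorphism because isomorphisms between well-orders are unique.
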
  

\begin{proof}
First notice that the notations in 
Condition (Ord)   make sense, since, for every $j \in \eta$, the set
$I_j = \{ \alpha \in \delta \mid \pi ( \alpha ) = j \}$
 is a convex subset of
$\delta$, hence it has the form 
$[\beta_j, \xi_j )  $
for some  $\beta_j < \xi_j$.
Here the inequality is strict, since 
$\pi$ is surjective, hence the counterimage of each point is nonempty.
 By ordinal arithmetic,
there is a unique nonzero ordinal 
$\varepsilon_j$ such that  
$\xi_j =  \beta_j + \varepsilon_j$, hence 
 $I_j $ has actually the form 
$[\beta_j, \beta_j + \varepsilon_j )  $, for some 
$\beta_j$ and $\varepsilon_j \neq 0$. 
Here $+$ denotes \emph{ordinal sum}.  

To prove the proposition, first notice that
$\prod'$ satisfies (U) trivially, since
$\prod$ is supposed to satisfy (U).
If $I$ is an ordinal, then $g$ is the identity, thus
$\prod'$ and $\prod$ coincide on ordinal-indexed sequences.   
Condition (N) remains to be proved.
If $\prod' _{i \in I} a_i $
is defined,  
for some well-ordered sequence, then,
by the definition
of $\prod'$,
also
$\prod _{ \alpha < \delta } a_{g( \alpha )} $
is defined and assumes the same value,
where $\delta$ and $g$ are as in the definition of $\prod'$. 
If  $\pi': I \to J$ is an order preserving map onto
a well-ordered set $J$ of order-type $\eta$,
then   $\pi'$ induces an order preserving map
$\pi: \delta \to \eta$.
By (Ord) and under the same notations there, 
$\prod _{ \alpha  < \delta } a_{g( \alpha )} = \prod _{j < \eta} 
\prod _{ \gamma < \varepsilon_j } a _{g(\beta_j + \gamma) }
$ and all factors are defined.
Now, for every $j\in \eta$, we have that  $\varepsilon_j$
is order-isomorphic to   
$I_j = \{ \alpha \in \delta \mid \pi ( \alpha ) = j \}
= [\beta_j, \beta_j + \varepsilon_j )  $.
If $j$ and  $j'$ correspond through the canonical 
isomorphism between   $\eta$ and  $J$, then the correspondence 
between $\pi$ and 
 $\pi'$ induces an order preserving map between
$I_j$ and  
$I_{j'} = \{ i \in I \mid \pi' ( i) = j' \}$, thus
$I_{j'} $ and 
 $\varepsilon_j$ are isomorphic.
But the latter is an ordinal, hence
the definition of $\prod'$
implies that  
$\prod' _{ \pi'(i) = j'} a_i $ 
is defined and equals 
$\prod _{ \gamma <\varepsilon_j } a _{g(\beta_j + \gamma) }$,
since the latter is defined by (Ord).
Thus  $\prod' _{ \pi'(i) = j'} a_i $ is defined for every $j' \in J$.
Then the above correspondence (which holds
for every $j' \in J$),  the definition of $\prod'$,
  and the fact that 
$ \prod _{j< \eta} 
\prod _{ \gamma < \varepsilon_j } a _{g(\beta_j + \gamma) }
$ is defined, again by (Ord), imply that
$\prod'_{j' \in J}\prod' _{ \pi'(i) = j'} a_i $ 
is defined. Moreover (Ord) and all the above correspondences
imply that 
 $\prod' _{i \in I} a_i = \prod_{j' \in J}\prod' _{ \pi'(i) = j'} a_i $.
Since $I$ and $\pi'$ were arbitrary we have proved that 
(N) holds for $\prod'$.  
\end{proof} 

Of course, we could have defined an ordinal semigroup by
restricting ourselves to ordinal-indexed sequences and 
 imposing only conditions (U)
and (Ord). However,
the definition from \ref{pncx}
has two advantages. First, the analogy with
complete (or partial) infinitary semigroups is clearer---the 
point being that, generally, for arbitrary types of linearly
ordered sets we have no preferred representative.
Moreover, the formulation of (N) 
is slightly easier in comparison with 
(Ord), in that $I_j$ is automatically well-ordered, though  not 
necessarily an ordinal.   
On the other hand, notice that, formally, the operation of 
a ${<}\gamma $-semigroup
 in the sense of Definition \ref{pncx}(II)
is always a proper class even
when $S$ is a set. Thus using only conditions
(U) and (Ord)
would be foundationally cleaner.

We have constructed ordinal semigroups  
proving (Ord), but then it is easier to use
(N) when working with products.
For example, we have been free to write
$\prod _{\pi(i) =j }  a_i$ in place
of  more involved expressions like
$\prod _{ \gamma < \varepsilon_j } a _{\beta_j + \gamma }$.
Note that, formally, if $\pi : \delta \to \eta$ is surjective
and order preserving, then 
$I_j = \{ \alpha \in \delta \mid \pi ( \alpha ) = j \} $
is not an ordinal, unless $j=0$.
 The shift from one notation to the other
 is possible by 
Proposition \ref{conseqb}.

\end{document}